\newcommand{\dmn}{\mathop{\rm dom}}
\newcommand{\clo}{\mathop{\rm cl_0}}
\renewcommand{\kappa}{\varkappa}
\newcommand{\Real}{\mathbb R}
\newcommand{\Ntr}{\mathbb N}
\newcommand{\Comp}{\mathbb C}
\newcommand{\eps}{\varepsilon}
\newcommand{\cA}{\mathcal{A}}
\newcommand{\cE}{\mathcal{E}}
\newcommand{\cK}{\mathcal{K}}
\newcommand{\cB}{\mathcal{B}}
\newcommand{\cH}{\mathcal{H}}
\newcommand{\cD}{\mathcal{D}}
\newcommand{\cL}{\mathcal{L}}
\newcommand{\ra}{\rangle}
\newcommand{\la}{\langle}
\newcommand{\ue}{u^\eps}
\newcommand{\lme}{\lambda^\eps}
\newcommand{\mbu}{\hat{u}}
\newcommand{\mbv}{\hat{v}}
\newtheorem{theorem}{Theorem}
\newtheorem{lemma}[theorem]{Lemma}
\newtheorem{proposition}[theorem]{Lemma}
\newtheorem{remark}[theorem]{Remark}%
\numberwithin{equation}{section}
\begin{document}

\title{On eigenvibrations of branched structures with heterogeneous mass density}

\author{Yuriy Golovaty\footnote{Department of Mechanics and Mathematics, Ivan Franko National University of Lviv, Universytetska str., Lviv, 79000, Ukraine,  \url{yuriy.golovaty@lnu.edu.ua}} \and 
Delfina G\'omez\footnote{Departamento Matem\'aticas, Estad\'istica y Computaci\'on,
Universidad de Cantabria, Av.~Los Castros, Santander, 39005, Spain, \url{gomezdel@unican.es}} \and 
Maria-Eugenia P\'erez-Mart\'{\i}nez\footnote{Departamento Matem\'atica Aplicada y Ciencias de la Computaci\'on, Universidad de Cantabria, Av.~Los Castros, Santander, 39005, Spain, \url{meperez@unican.es}}}
\date{}

\maketitle

\noindent
{\bf Abstract:} 
We deal with  a spectral problem for the Laplace-Beltrami operator posed on  a stratified set $\Omega$ which is composed of   smooth surfaces joined along a line $\gamma$, {\em the junction}.
Through this junction we impose  the Kirchhoff-type vertex conditions, which imply the continuity of the solutions  and some balance for normal derivatives, and Neumann conditions on the rest of the boundary of the surfaces.  Assuming that the density is $O(\eps^{-m})$ along small bands of width $O(\eps)$, which collapse  into the  line $\gamma$ as $\eps$ tends to zero, and it is $O(1)$ outside these bands, we address the asymptotic behavior, as $\eps \to 0$, of the eigenvalues and of the corresponding eigenfunctions for a parameter $m\geq 1$.
We also study the asymptotics for high frequencies when $m\in(1,2).$

\smallskip

\noindent
{\bf Keywords:} 
spectral analysis, Laplace-Beltrami operator,   concentrated masses, stratified sets,  singularly perturbed problems

\smallskip
\noindent
{\bf MSC:} 35B25, 35J25, 35P15, 58J32, 74H10

\section{Introduction}

This section is devoted to the introduction and state of the art of the different mathematical issues arising in the  model under consideration. Let us mention Vibrating systems with concentrated masses (see Section \ref{sec1.1}) and Stratified sets as a generalization of metric graphs (see Section \ref{sec1.2}). Also, in Section \ref{sec1.3}, we describe the main results and  the structure of the paper.

\subsection{Vibrating systems with concentrating masses. A historical review}\label{sec1.1}

Vibrating systems with concentrated masses have been widely studied in the literature of different disciplines such as mechanics, civil engineering and mathematics. As is well known introducing a concentrated mass in a vibrating system may distort the vibrations but also  allow to control them (cf. e.g. \cite[VII.10-VII.14]{SHSP-book}). A concentrated mass is referred to as a  ``small region'' where the density is ``much higher'' than elsewhere. We denote by $\rho^\eps$ the density which is assumed $O(\eps^{-m})$ in this region and $O(1)$ outside, $\eps$ being a small parameter that we shall make to go to zero. The concentrated mass can be centered  at a point (cf. \cite{SHSP-book} and \cite{OleinikBook} for  description of the problem in different frameworks) or at very many points  including homogenization processes (cf. \cite{LoPe-review} and \cite{Chechkin} for different reviews). Also, it can be concentrated  along a manifold; further specifying, along 1-d manifold, cf. \cite{Tchatat, GolovatyLavrenyuk2000, GoGoLoPe-1}  for the first works on the subject,  or a 2-d manifold,  cf. \cite{GoLoPe-masaplano} and references therein. Let us also mention the vectorial models in   \cite{SP-Tchatat,GoLoPe-placa} for instance.

Many different situations may occur  depending on the operators under consideration, the boundary conditions and the value of $m$. A common fact is that depending on  $m$, the high frequencies may play an important role, since they give rise to vibrations of the whole structure, i.e.  {\em global vibrations}, while the low frequencies describe vibrations in reduced surroundings of the concentrated mass, i.e. {\em local vibrations}. But also many different important phenomena appear depending on the range of frequencies in which we move. As regards the low frequencies, let us mention  the {\em asymptotic  infinite   multiplicity}  in \cite{PeM2AS-2005} or the strongly oscillatory behavior of the associated eigenfunctions \cite{NaPe-2018}. Similarly, for the high frequencies,  let us mention the whispering gallery phenomena on interfaces at a microscopic level  or the skin-phenomena, cf \cite{LoPe-review} for precise references.

In all these models, when dealing with the Laplacian operator, a different treatment must be given to the different value of $m$, $m\in (0,2)$ or $m>2$, the case $m=2$ making somehow a threshold for the study, since the localization of the vibrations along points or lines may turn  into a phenomena of interaction between microscopic and macroscopic scales, cf. \cite{GoNaOl-2} and  the review \cite{Golovaty2020M2} for the case of a string with concentrated mass, \cite{OleinikBook} and \cite{GoLoPe-1999} for the case of a concentrated mass in dimensions $3$ and $2$ or \cite{Golovaty2022AS} for the case of  mass concentration  along a curve.

However, in the case where the mass concentration   occurs near a manifold, the value $m=1$ also makes a threshold, cf. \cite{GoGoLoPe-1,GoGoLoPe-2,Golovaty2022AS} for 1-d manifold, and the same applies in the case where the perturbation around a curve comes from  stiffness coefficients \cite{GoNaPe-1,GoNaPe-2}, or potential perturbation \cite{Golovaty2022AA}  (cf. e.g. \cite{Arrieta}  for stationary problems).

Mixing together high mass concentration and  stiffness  is  widely used in reinforcement problems  giving rise to interesting phenomena which   includes an asymptotic concentration of the vibrations (associated to low or high frequencies depending on the boundary conditions) near certain points with particular geometrical characteristics of the curves defining the domain of perturbation, cf.  \cite{GoNaPe-1,GoNaPe-2,GoNaPe-3,GoNaPe-4}.

The spectral problem for the Laplace operator with a perturbed density is also used to describe wave propagation in high-contrast photonic and acoustic media. In this case, the density represents a dielectric constant. In \cite{FK98, KK99}, the spectral properties of a medium in which the dielectric constant is very large near a periodic graph in $\mathbb{R}^2$ were investigated.

Also, it should be mentioned that Steklov type problems with the spectral parameter arising on the boundary condition  appear in a natural way as limits of problems with mass perturbation  (cf. the reviews  \cite{LoPe-review, Girouard} and references therein).

Some of the phenomena above described arise in the problem under consideration, with the additional complication resulting from the geometrical configuration of our problem (cf. Section \ref{sec1.3}) which implies boundary value problems on stratified/ramified sets,  as we describe in Section \ref{sec1.2}.

\subsection{Stratified sets as a generalization of metric graphs. Singularly perturbed problem on graphs}\label{sec1.2}
Boundary value problems for  differential  operators on stratified sets (ramified spaces,  branched structures or open book structures) are widely studied in the literature, cf.
\cite{Lumer1980,Nicaise1988,Nicaise1990,VonBelowNicaise1996,FW04}.
Our problem lies within this framework at least at a local level, and also globally when the surfaces become planes; cf. Figures~\ref{FigSS} and \ref{FigGraph}, \eqref{omegaeps} and reference \cite{VonBelowNicaise1996}.

Boundary value problems on stratified sets are a natural generalization to higher dimensions of similar problems on graphs, see recent preprint \cite{AK2024}, in which the basic concepts of quantum graphs are generalized to the case of stratified sets.
 In recent years, the theory of differential operators on metric graphs has been extensively studied due to its numerous possible applications in physics and solid-state engineering. However, the most interesting application of this theory from the point of view of physics is quantum graphs. Quantum dynamics typically exhibit high complexity, particularly when propagating through branched structures. There is a vast amount of literature on quantum graphs, and readers can refer to \cite{Kuchment2002, BerkolaikoCarlson2006, ExnerKeating2008} and the bibliography therein.
A boundary value problem on a metric graph is a set of differential operators on the edges and some matching conditions for solutions at the graph's vertices. There is a broad set of coupling conditions at the vertices for operators on graphs, in contrast to classical 1D operators. This makes the theory of operators on graphs much richer.
However, a large number of possible vertex conditions leads to the problem of choosing physically motivated ones.
The mathematical approach to building correct mathematical models, in addition to the experimental one, is based on various approximations of processes on graphs. For instance, singular perturbation theory provides an efficient method to find physically motivated point interactions at vertices. Suppose we are interested in the effect of a localized potential or a localized mass density at a vertex. In this case, we must analyze the convergence of the family of singularly perturbed operators. The limit operator will include only vertex interaction conditions that are physically determined (see, e.g.,
\cite{Golovaty2023,ExnerManko2013}).
This article studies a mathematical model that generalizes the vibration of a network of strings with heavy connections.
The articles \cite{GolovatyHrabchak2007, GolovatyHrabchak2010} examine spectral problems related to the Laplace operator on metric graphs. The study focuses on perturbations of the mass density near the vertices.

\subsection{Main results and the structure of paper}\label{sec1.3}

The geometrical configuration of the problem that we broach here is completely different from those treated in the literature. We deal with  a boundary value problem  on $\Omega$, a stratified set which is composed of   smooth surfaces (subsets of Riemanian manifolds) somehow joined along a line $\gamma$, {\em the junction}, near  which the mass perturbation is located (see Figure~\ref{FigSS}).  On this domain   we consider a spectral problem associated with the vibrations of such a stratified set. The operator under consideration is the Laplace-Beltrami operator, the mass perturbation being distributed along small bands   close to the junction which form also a stratified set $\omega^\eps$. These bands  of width $O(\eps)$ collapse  into the  line $\gamma$ as $\eps \to 0$, where we impose  the Kirchhoff-type vertex conditions which imply the continuity of the solutions  and some balance for normal derivatives through $\gamma$.  On the rest of the boundary of the surfaces we impose Neumann conditions.
For an extensive introduction to    boundary value  problems for the Laplace-Beltrami operator for Lipschitz domains in Riemannian manifolds    and  their variational formulations on Sobolev spaces, let us mention \cite{MitreaTaylor}.

As above mentioned, the problem represents a first approach to vibrating models arising in many fields where some reinforcements along junctions   become  essential to control vibrations. Example of such structures  where the models can arise are propellers and turbines (cf. Figure~\ref{FigPropellers}), but also in reinforcements of corners of engineering constructions among  others. To detect which mass gives rise to certain kind of vibrations becomes important in numerous aspects.

Assuming that the density is $O(\eps^{-m})$ in the  stratified set $\omega^\eps$ which may be seen as the edges of a cylinder of radius $O(\eps)$ and length $O(1)$,
see Figure~\ref{FigGraph}, we address the asymptotic behavior, as $\eps$ tends to zero, of  the spectrum of problem  \eqref{PertPrblEq}-\eqref{PertPrblKirh} for a positive parameter $m$.

The model being completely new in the literature, the aim of Section \ref{SecOperatorB} is to determine the spectral properties of eigenvalues and eigenfunctions of the associated self-adjoint bounded operator.  In order to do that,     we  relate its spectrum  with  that of  a Dirichlet-to-Neumann type operator on $L^2(\gamma)$ and of the operator associated to problem \eqref{PrbB00N} that keeps  $\gamma$ fixed, cf. Theorem \ref{LemmaSpectrumB}.
 For fixed $\eps$ and $m$, the  spectrum  of problem \eqref{PertPrblEq}-\eqref{PertPrblKirh} is discrete and we  denote by $\{\lambda_j^\eps\}_{j=1}^\infty$ the set of eigenvalues with the convention of repeated index. In Section \ref{SecAsymp},    by means of matched asymptotic expansions we address the case where $m=1$,   obtaining as limit problem \eqref{LimitPrbEqN}-\eqref{LimitPrbKirch},  a spectral problem in the  stratified set  $\Omega$ with the spectral parameter appearing both  on the partial differential equation and on the junction condition along $\gamma$ relating solutions and normal derivatives  through $\gamma$.    It has also   a discrete spectrum that we denote by $\{\lambda_j\}_{j=1}^\infty$ with a structure described by Theorem \ref{ThLimitSpectrum}. In Section \ref{SecM1}, we show the convergence with conservation of the multiplicity, based on properties from spectral perturbation theory for uniform discrepancies in the operators norm. More specifically, for each $j=1,2,\cdots$, we have
\begin{equation}\label{low1}
  |\lambda^\eps_j-\lambda_j|\le C_j\eps^{1/2},
\end{equation}
where $C_j$ is a constant independent of $\eps$
(see Theorem \ref{TheoremMequals1}).

This implies that the
eigenvalues  $\lambda_i^\eps$ are of  $O(1)$ when $m=1$, and the technique in Section 4 based on asymptotic expansions applies,  with minor modifications,  for $m>1$ and the eigenvalues $\lambda^\eps$ of order $O(1)$, which amounts, in this new case to the {\em high frequencies} and $\lambda^\eps=\lambda_{i(\eps)}^\eps$ where $i(\eps)\to +\infty$ as $\eps \to 0$. Let us explain this in further detail.

Indeed, in Section \ref{SecM>1} we deal with the limit behavior, as $\eps \to 0$, of the eigenvalues $\lambda_i^\eps$ for each fixed $i$. A scaling of these values $\eps^{1-m} \lambda_i^\eps$ along with  the technique in Section \ref{SecM1}, provide us with the limit problem when $m>1$: \eqref{LimitPrbEqNLow}-\eqref{LimitPrbKirchLow} which now has the spectral parameter only on the transmission condition along the junction line $\gamma$. Henceforth there is a mass concentration along $\gamma$, which likely leads to vibrations of this part. We show
\begin{equation}\label{convvpbis}
  |\lambda^\eps_j-\eps^{m-1}\lambda_j|\le C_j\eps^{\alpha(m)},
\end{equation}
where $\alpha(m)=\min\{m-\frac12,2(m-1)\}$.  Obviously, now, $\{\lambda_j\}_{j=1}^\infty$ compose the spectrum of \eqref{LimitPrbEqNLow}-\eqref{LimitPrbKirchLow} (see  Theorem \ref{TheoremLowFrequencies}).

Formula \eqref{convvpbis} determines the order of magnitude of the low frequencies to be $\eps^{m-1}$ and, following the well-known fact that    the high frequencies may accumulate on the whole real positive axis, we look for eigenvalues $\lambda^\eps$ of order $O(\eps^\beta)$ for some $\beta <m-1$ (cf. \cite{LoPe-1997,GoGoLoPe-2,CastroZuazua})  giving rise to other   vibrations that cannot be detected with the low frequencies. This is the aim of  Section \ref{SecM12}, where for the sake of brevity we only address the case of $m\in (1,2)$, leaving the rest of the cases for a forthcoming publication by the authors.

Thus, for $m>1$  the eigenvalues of order $O(1)$ belong to the range of  the high frequencies, and     rewriting the asymptotic expansions in Section \ref{SecAsymp},  with the suitable modifications,  we are lead to the spectrum of operator \eqref{PrbB00}, namely to problem  \begin{equation}\label{PrbB00N}
  -\Delta_\Omega u+Vu=\lambda u  \text{ in } \Omega,\quad
   \partial_n u=0\text{ on } \Gamma,\quad
   u=0 \text{ on } \gamma.
\end{equation}
Henceforth, the corresponding vibrations keep the junction line $\gamma$ fixed.
We show that only the eigenfunctions associated to eigenvalues $\lambda^\eps$ asymptotically near   eigenvalues $\lambda^0$  of problem \eqref{PrbB00N} can be  asymptotically non null in the sense stated by Theorem \ref{ThM12}. We also get results on the total multiplicity of the eigenvalues approaching $\lambda^0$, in the sense stated by Theorem \ref{ThM12bis}. The proof is based on the construction of families of ``almost orthonormal quasimodes'' from the perturbation of eigenvalues.

\section{Statement of problem}

Let us introduce a set that is a bundle of surfaces connected along a curve. Let $\gamma$ be a straight line segment lying on the $x_3$-axis:
\begin{equation*}
    \{x\in \Real^3\colon x_1=0,\;x_2=0,\;0\le x_3\le l\}.
\end{equation*}
Suppose $\Omega_1,\dots,\Omega_K$ is a collection of bounded $C^\infty$-smooth surfaces with the Lipschitz boundaries  embedded
in $\Real^3$ without intersections. We assume that
\begin{equation*}
  \gamma=\bigcap_{k=1}^K \partial\Omega_k,
\end{equation*}
and only the points of $\gamma$ can be common to any pair of these boundaries. Let
\begin{equation*}
 \Omega=\Omega_1\cup\dots\cup\Omega_K.
\end{equation*}
The union $\Omega^*=\gamma\cup\Omega$ can be treated as a stratified set with two strata: the first stratum is the curve $\gamma$ and the second one consists of all surfaces $\Omega_k$.

A function $v$ on $\Omega$ is a collection of functions $\{v_1,\dots,v_K\}$, where $v_k\colon \Omega_k\to \Comp$.
We generally do not assign any values to $v$ on $\gamma$, because the one-sided limits of $v$ at points of $\gamma$ may differ when approached along the different surfaces.
Throughout the paper, $W_2^j(\Omega)$ stands for the Sobolev space of functions belonging to $L_2(\Omega)$ together with their derivatives up to order $j$.  We adhere to the convention that a function $v$ belongs to some space $X(\Omega)$ if $v_k$ belongs to $X(\Omega_k)$ for all $k=1,\dots,K$, i.e.,
\begin{equation}\label{SumSpace}
X(\Omega)=\bigoplus_{k=1}^{K}X(\Omega_k),\qquad \|v\|_{X(\Omega)}=\sum_{k=1}^{K}\|v_k\|_{X(\Omega_k)}.
\end{equation}
Note that the surface $\Omega_k$ inherits a metric by restricting the Euclidean metric to $\Omega_k$. This metric makes $\Omega_k$ into a Riemannian manifold.

Set $\Gamma_k=\partial\Omega_k\setminus\gamma$ and $\Gamma=\bigcup_{k=1}^K \Gamma_k$. We assume that $\Gamma_k$ are $C^2$ curves. Let us introduce two vector fields on $\partial \Omega_k$. The unit outward normal vector to $\Gamma_k$ is denoted by $n_k$, and the unit inward  normal vector to $\gamma$ (as a part of $\partial\Omega_k$) is denoted by $\nu_k$. We combine all  fields $n_k$ into the single normal field $n$ defined on $\Gamma$. In addition, there are $K$ different vector fields $\nu_1,\dots,\nu_K$ on $\gamma$ (see Figure~\ref{FigSS}).

\begin{figure}[ht]
  \centering
  \includegraphics[scale=0.5]{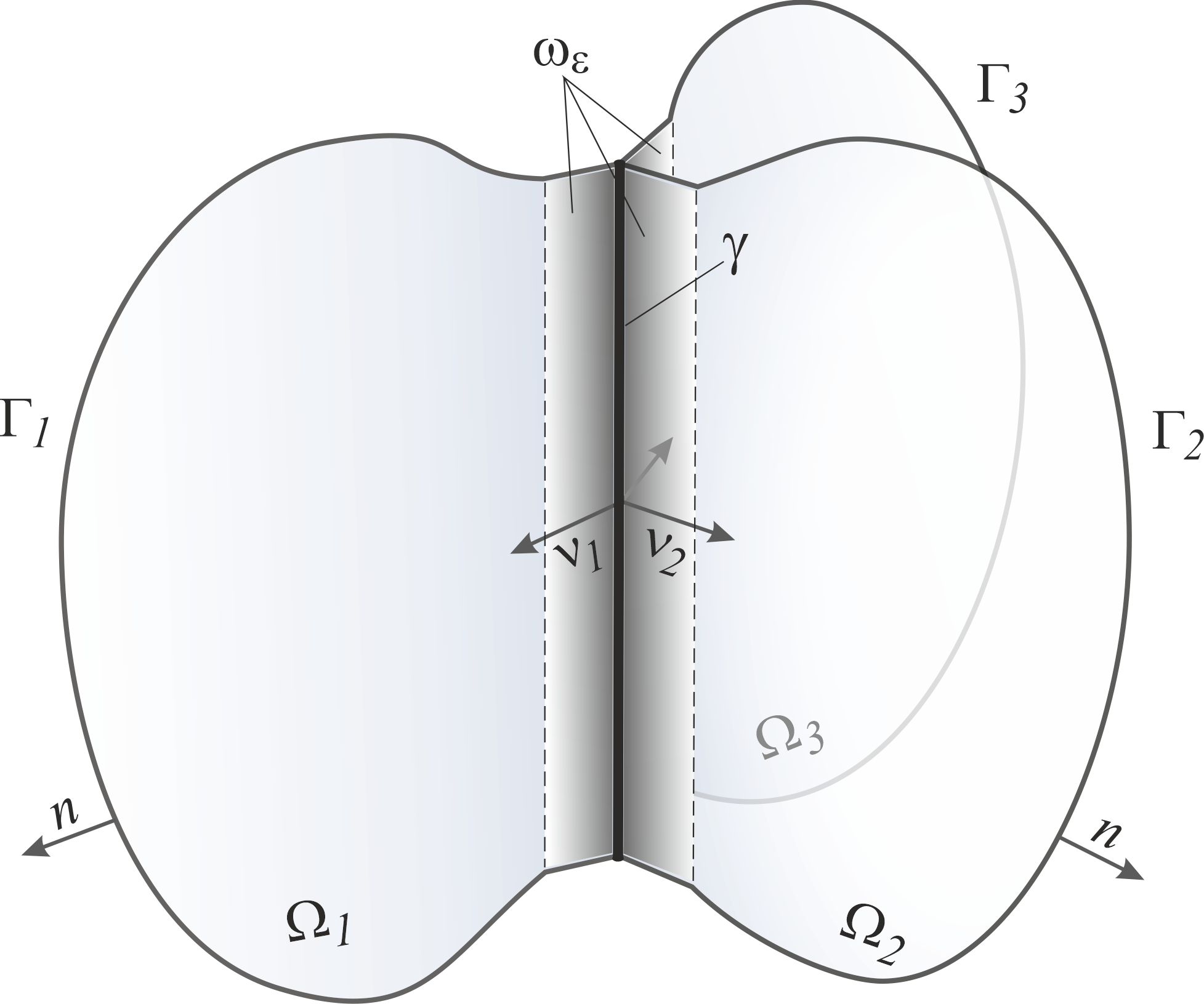}\\
  \caption{Stratified set $\Omega^*$.}\label{FigSS}
\end{figure}

We consider the eigenvalue problem
\begin{align}\label{PertPrblEq}
 -&\Delta_\Omega \ue+(V-\lme\rho^\eps)u^\eps=0 \quad\text{in }\Omega,\\\label{PertPrblNC}
 &\partial_n\ue=0\quad\text{on } \Gamma,\\\label{PertPrblCont}
 &\ue_1=\ue_2=\cdots=\ue_K\quad\text{on } \gamma,\\\label{PertPrblKirh}
 &\partial_{\nu_1}\ue_1+\partial_{\nu_2}\ue_2+\cdots+\partial_{\nu_K}\ue_K=0 \quad\text{on } \gamma.
\end{align}
The operator $\Delta_\Omega$ acts as the Laplace-Beltrami operator $\Delta_{\Omega_k}$ on each $\Omega_k$, i.e.,
\begin{equation*}
  \Delta_\Omega v =\{\Delta_{\Omega_1}v_1,\dots,\Delta_{\Omega_K}v_K\}.
\end{equation*}
The potential $V$ is a real-valued function that belongs to $L^\infty(\Omega)$.  The weight function $\rho^\eps$ describes a highly heterogeneous mass distribution on $\Omega$ as $\eps\to 0$. Let $\omega^\eps$ be the intersection of $\Omega^*$ with the $\eps$-neighbourhood of $\gamma$. We define
\begin{equation*}
  \rho^\eps=
  \begin{cases}
   \phantom{\eps^{-m}} \rho &\text{in } \Omega\setminus \omega^\eps,\\
    \eps^{-m}q^\eps &\text{in } \omega^\eps,
  \end{cases}
\end{equation*}
where $\rho$ and $q^\eps$ are measurable, bounded and positive functions, and $m\ge 1$.
We study the asymptotic behavior as $\eps\to 0$ of the eigenvalues $\lme$ and the eigenfunctions $\ue$ of \eqref{PertPrblEq}-\eqref{PertPrblKirh}.
Equation \eqref{PertPrblEq} is actually the collection of  equations
\begin{equation*}
  -\Delta_{\Omega_k} \ue_k+(V_k-\lme\rho^\eps_k) \ue_k=0\quad\text{in }\Omega_k,\qquad k=1,\dots,K.
\end{equation*}
Conditions \eqref{PertPrblCont}, \eqref{PertPrblKirh} have been inspired by the Kirchhoff vertex conditions that are widely used for the description of string networks and quantum graphs;
also these conditions naturally arise for stratified sets as shown in \cite{CK20}.
These conditions recall transmission conditions.
Condition \eqref{PertPrblCont} ensures continuity of the solution on the whole stratified set $\Omega^*$ while \eqref{PertPrblKirh} can be treated as the tension balance of connected membranes.

Let us introduce some geometric objects and functions above mentioned in further details.

Let $G$ be a compact  star graph  consisting of the vertices $\{a,a_1,\dots,a_K\}$ and the edges $\{e_1=(a,a_1),\dots,e_K=(a,a_K)\}$ meeting at the vertex $a$.  We implement $G$ as a planar metric graph with a metric obtained from the natural embedding of $G$ into $\Real^2_{x_1, x_2}$. Assume that the vertex $a$ coincides with the origin,  other vertices lie on the unit circle $S^1$, and all the edges are radii of $S^1$. Moreover, we assume that the edges $e_1,\dots,e_K$ are drawn in the direction of the normal vectors
$\nu_1,\dots,\nu_K$ respectively. Let $\omega=G\times \gamma$ be the stratified set which consists of $K$ rectangles $\omega_1=e_1\times\gamma,\dots,\omega_K=e_K\times\gamma$ connected along $\gamma$ (see Figure~\ref{FigGraph}).

\begin{figure}[ht]
  \centering
  \includegraphics[scale=0.6]{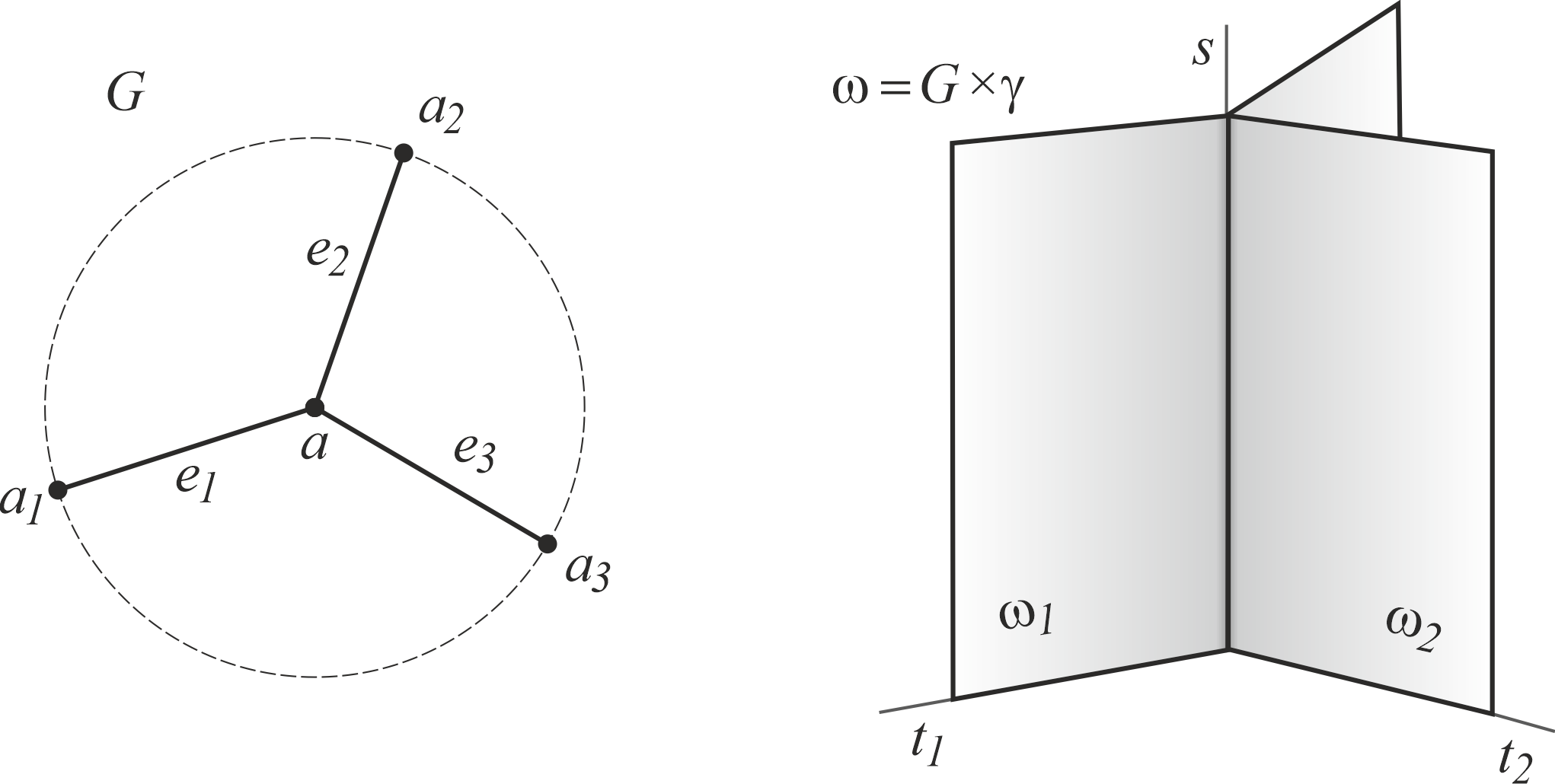}\\
  \caption{The graph $G$ and set $\omega=G\times \gamma$.}\label{FigGraph}
\end{figure}

To keep the mathematics rather simple, we  suppose that the intersection of $\Omega^*$ with the $\eps$-neighbourhood of $\gamma$ has the form
\begin{equation}\label{omegaeps}
\omega^\eps=\{x\in \Real^3\colon (\eps^{-1}x_1, \eps^{-1}x_2, x_3)\in \omega\}.
\end{equation}
This neighborhood is the homothetic to $\omega$ in the $x_1$ and $x_2$ directions of ratio $\eps$. The intersection $\omega^\eps_k=\omega^\eps\cap \Omega_k$ is a rectangle of width $\eps$ and height $l$. We can define the orthogonal  coordinates $(y_k,x_3)$ in $\omega^\eps_k$, where $y_k\in(0,\eps)$ and $x_3\in(0,l)$. Now we can specify the explicit dependence of density $q^\eps$ on a small parameter $\eps$. Let $q\colon \omega\to \Real$ be a measurable, bounded and positive functions. We set
\begin{equation}\label{qeps}
  q_k^\eps(x)=q_k(\eps^{-1} y_k,x_3)\quad\text{in } \omega^\eps_k.
\end{equation}

Similarly, the local coordinate system $(t_k,s)\in(0,1)\times(0,l)$, $t_k=\varepsilon^{-1}y_k$ the stretched  coordinates, appears on each set $\omega_k=e_k\times\gamma$. Here $t_k$ and $s$ are the natural parameters on $e_k$ and $\gamma$ respectively. We say that $\omega$ is equipped with the coordinates $(t,s)$,  meaning that each component $\omega_k$ has its coordinates $(t_k,s)$. We consider $t$ the distance from a point of $\omega$ to $\gamma$.
Also, $f(t,s)$ and $g(y,x_3)$ stand for $(f_1(t_1, s),\dots, f_K(t_K, s))$ and $(g_1(y_1, x_3),\dots, g_K(y_K, x_3))$ respectively.

\section{Spectral properties of the perturbed problem}\label{SecOperatorB}
In this section, we will describe spectral properties of \eqref{PertPrblEq}-\eqref{PertPrblKirh} for a fixed value of $\eps$. We denote by $L_2(h,\Omega)$  the weighted $L_2$-space endowed with the norm
\begin{equation*}
  \|\phi\|_h=(\phi,\phi)_h^{1/2}=\left(\int_\Omega h|\phi|^2\,dS\right)^{1/2},
\end{equation*}
where $h$ is a positive $L^\infty(\Omega)$-function and $dS$ is the volume form on $\Omega$.
We  say that a function $\phi$ is continuous on  $\Omega^*$ if $\phi$ satisfies condition
$\phi_1=\phi_2=\cdots=\phi_K \quad \text{on } \gamma$. In this case,  we write $\phi|_\gamma$ for the common trace of $\phi_k$ on $\gamma$.
We will also write $\cK v$ instead of $\sum_{k=1}^K\partial_{\nu_k}v_k$.
In the space $L_2(h,\Omega)$ we define the operator
\begin{multline*}
  \cB=h^{-1}(-\Delta_\Omega+V)\text{ in } L_2(h,\Omega),\\ \dmn \cB=\big\{\phi\in W_2^2(\Omega)\colon \partial_n \phi=0 \text{ on } \Gamma,  \phi \text{ is continuous on } \Omega^*,\; \cK \phi=0  \big\}.
\end{multline*}
Then eigenvalue problem \eqref{PertPrblEq}-\eqref{PertPrblKirh} is related to the operator $\cA_\eps$, which coincides with the operator $\cB$ for $h=\rho^\eps$, namely
\begin{equation*}
\cA_\eps=\tfrac{1}{\rho^\eps}(-\Delta_\Omega+V)
\end{equation*}
in $L_2(\rho^\eps,\Omega)$, and $\dmn \cA_\eps=\dmn \cB$.

\begin{lemma}\label{LemmaBselfadjoint}
  The operator $\cB$ is closed, self-adjoint, bounded from below, and has a compact resolvent.
\end{lemma}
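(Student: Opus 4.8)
The plan is to realize $\cB$ as the operator associated with a closed, symmetric, bounded-below sesquilinear form, and then invoke the standard representation theorem together with a compact-embedding argument. First I would introduce the form
\[
  b[\phi,\psi]=\int_\Omega\big(\nabla_\Omega\phi\cdot\overline{\nabla_\Omega\psi}+V\phi\,\overline{\psi}\big)\,dS,
\]
with form domain
\[
  \cV=\{\phi\in W_2^1(\Omega)\colon \phi\text{ is continuous on }\Omega^*\},
\]
the inner product being taken in $L_2(h,\Omega)$. Note $\cV$ is a closed subspace of $W_2^1(\Omega)$ because the trace maps $\phi\mapsto\phi_k|_\gamma$ are continuous from $W_2^1(\Omega_k)$ into $L_2(\gamma)$, so the continuity constraints $\phi_1=\dots=\phi_K$ on $\gamma$ are closed conditions; hence $\cV$ with the $W_2^1$-norm is a Hilbert space. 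Since $h=\rho^\eps\in L^\infty(\Omega)$ is bounded above and below by positive constants (for fixed $\eps$), the $L_2(h,\Omega)$-norm is equivalent to the $L_2(\Omega)$-norm, and $\cV\hookrightarrow L_2(h,\Omega)$ is dense.

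Next I would check that $b$ is bounded-below and closed on $\cV$. Boundedness below and form-coercivity follow from $V\in L^\infty(\Omega)$: for any $\delta>0$,
\[
  b[\phi,\phi]\ge \|\nabla_\Omega\phi\|_{L_2(\Omega)}^2-\|V\|_{L^\infty}\|\phi\|_{L_2(\Omega)}^2
  \ge c\,\|\phi\|_{W_2^1(\Omega)}^2-C\,\|\phi\|_{L_2(h,\Omega)}^2,
\]
so $b[\phi,\phi]+C\|\phi\|_h^2$ is equivalent to the square $W_2^1$-norm on $\cV$; closedness of the form is then exactly the completeness of $\cV$ in the $W_2^1$-norm, already established. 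By the first representation theorem there is a unique self-adjoint, bounded-below operator $\widetilde\cB$ in $L_2(h,\Omega)$ with form $b$. To see $\widetilde\cB=\cB$, one runs the usual integration-by-parts (Green's formula on each Lipschitz surface $\Omega_k$) argument: if $\phi\in\dmn\widetilde\cB$ then $h^{-1}(-\Delta_\Omega+V)\phi\in L_2$ in the distributional sense on each $\Omega_k$, elliptic regularity up to the boundary gives $\phi\in W_2^2(\Omega)$, and testing $b[\phi,\psi]=(\widetilde\cB\phi,\psi)_h$ against $\psi\in\cV$ with varying traces on $\gamma$ forces the natural boundary conditions $\partial_n\phi=0$ on $\Gamma$ and the Kirchhoff balance $\cK\phi=0$ on $\gamma$; the reverse inclusion is immediate from Green's formula. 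Thus $\cB$ is self-adjoint, bounded below, and (being self-adjoint) closed.

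Finally, compactness of the resolvent follows because $\cV\hookrightarrow L_2(h,\Omega)$ is compact: each $W_2^1(\Omega_k)\hookrightarrow L_2(\Omega_k)$ is compact (Rellich--Kondrachov on the bounded Lipschitz surface $\Omega_k$), hence the finite direct sum $W_2^1(\Omega)\hookrightarrow L_2(\Omega)$ is compact, and $\cV$ is a closed subspace while the $L_2(h)$- and $L_2$-norms are equivalent. Since $(\cB+C)^{-1}$ factors through this compact embedding, it is compact, and therefore $\cB$ has compact resolvent with discrete spectrum. I expect the only delicate point to be the identification $\dmn\widetilde\cB=\dmn\cB$, specifically justifying the $W_2^2$-regularity up to the junction $\gamma$ and the corners of $\Gamma_k$ on a Riemannian surface with Lipschitz boundary; here one invokes the elliptic regularity theory for the Laplace--Beltrami operator on Lipschitz domains in Riemannian manifolds (as in the reference \cite{MitreaTaylor} cited above), separately near $\Gamma$ (Neumann data) and near $\gamma$ (where, along each $\Omega_k$, the trace and conormal derivative are prescribed so the local problem is a standard mixed/Neumann problem on one sheet), the Kirchhoff coupling being a linear relation among the already-regular conormal traces.
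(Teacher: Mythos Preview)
Your proof is correct but takes a different route from the paper. The paper proceeds directly: it computes $(\cB\phi,\psi)_h-(\phi,\cB^*\psi)_h$ via Green's formula on each $\Omega_k$, reads off the boundary terms, and concludes $\dmn\cB^*=\dmn\cB$ from their vanishing; boundedness below is then one line from $V\in L^\infty$, and compactness of the resolvent follows from $\dmn\cB\subset W_2^2(\Omega)\hookrightarrow L_2(h,\Omega)$. Your form approach via the first representation theorem is more systematic, and the form domain $\cV$ you introduce coincides with the space $\cH$ used later in Section~\ref{SecM1}, so your construction anticipates the variational framework of the convergence proofs. The paper's direct computation is shorter and makes the role of the Kirchhoff condition as the natural boundary condition transparent, while your route packages closedness, self-adjointness and semiboundedness into a single invocation of the representation theorem at the cost of the extra identification step $\widetilde\cB=\cB$. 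Both approaches ultimately rest on the same $W_2^2$-regularity up to the boundary that you flag---the paper needs it implicitly to know $\dmn\cB^*\subset W_2^2(\Omega)$ before integrating by parts, you need it for the identification---and your diagnosis of this as the only delicate point is accurate.
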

\begin{proof}
  Given $\phi, \psi\in W_2^2(\Omega)$, we have
\begin{multline*}
    \int_\Omega \Delta_\Omega\phi\, \overline{\psi}\,dS=\sum_{k=1}^K\int_{\Omega_k} \Delta_{\Omega_k}\phi_k \, \overline{\psi_k}\,dS\\=\sum_{k=1}^K\left(\int_{\Gamma_k} \partial_n\phi_k \,\overline{\psi_k}\,d\ell-\int_{\gamma} \partial_{\nu_k}\phi_k \,\overline{\psi_k}\,d\ell\right)-\int_{\Omega} \nabla\phi\cdot
    \nabla\overline{\psi}\,dS,
\end{multline*}
where $d\ell$ is the measure on $\partial \Omega$.
Recall that $\nu_k$ is the inward normal field on $\gamma$.  If we suppose that $\phi$ belongs to the domain of $\cB$, then
\begin{multline*}
  (\cB\phi,\psi)_h-(\phi,\cB^*\psi)_h=-\int_\Omega \Delta_\Omega\phi\, \overline{\psi}\,dS+\int_\Omega\phi\, \Delta_\Omega\overline{\psi}\,dS\\=\sum_{k=1}^K\int_{\Gamma_k} \phi_k \,\partial_n\overline{\psi_k}\,d\ell
  +\sum_{k=1}^K\int_{\gamma} \partial_{\nu_k}\phi_k \,\overline{\psi_k}\,d\ell-\int_{\gamma}\phi \,\cK\overline{\psi}\,d\ell.
\end{multline*}
We see at once that the weakest conditions on $\psi$ under which the equality
\begin{equation*}
(\cB\phi,\psi)_h=(\phi,\cB^*\psi)_h
\end{equation*}
holds for all $\phi\in\dmn\cB$ are $\partial_n \psi=0$  on $\Gamma$,  $\psi_1=\cdots=\psi_K$ and $\cK \psi=0$ on $\gamma$. Therefore $\dmn\cB=\dmn\cB^*$ and $\cB$ is self-adjoint.

Since $V\in L^\infty(\Omega)$, there exists a positive constant $c$ such that $V(x)>-c$ for almost all $x\in \Omega$. Then
\begin{equation*}
  (\cB\phi,\phi)_h=\int_{\Omega}\left(|\nabla\phi|^2+V|\phi|^2\right)\,dS\ge -c\int_{\Omega}|\phi|^2\,dS
 \ge-\frac{c}{h_{min}}\,\|\phi\|^2_h
\end{equation*}
for all  $\phi\in\dmn \cB$, where  $h_{min}=\min_{\Omega} h$. Hence, $\cB$ is bounded from below.

We observe that, for $\lambda\in\rho(\mathcal{B})$, the resolvent $(\cB-\lambda)^{-1}$ is a bounded operator from $L_2(h,\Omega)$ to the domain of $\cB$ equipped with the graph norm. Since the latter space is a subspace of $W_2^2(\Omega)$, it follows that the resolvent is compact as an operator in $L_2(h,\Omega)$. 
\end{proof}

Thus, the spectrum of $\cB$, denoted by $\sigma(\cB)$,  is real discrete, bounded from below, and it  consists of eigenvalues with finite multiplicity. To describe it in more depth, we introduce the sets $\Sigma_\cD$ and $\Sigma_\Theta$ associated with operators $\cD$ and $\Theta(\lambda)$ defined below (cf. Theorem \ref{LemmaSpectrumB}).

Let $M$ be a 2-dimensional, $C^\infty$-smooth, connected, compact, oriented  Rieman\-nian manifold with boundary, and let $\vartheta$ be a non-empty open subset of $\partial M$.  We consider the boundary value problem
\begin{equation}\label{BVPonM}
  -\Delta_M  v+(b-\mu \varrho)v = 0 \text{ in } M,\quad v=\psi\text{ on } \vartheta, \quad \partial_\nu v=0\text{ on }\partial M\setminus \vartheta,
\end{equation}
where $\mu\in\Comp$, $b$ is a real $L^\infty(M)$-function, $\varrho$ is a positive $L^\infty(M)$-function, and $\partial_\nu$ is the inward normal deri\-va\-tive on $\partial M$. Let $\Theta(\mu)$ be the Dirichlet-to-Neumann map
\begin{equation*}
\begin{split}
\Theta(\mu)\psi=\partial_\nu v|_\vartheta, \,\, \dmn \Theta(\mu)=&\big\{\psi\in L_2(\vartheta)\colon v\in W_2^1(M) \text{ and } \partial_\nu v|_\vartheta\in L_2(\vartheta) \\
&\, \mbox{ where $v$ is a solution of \eqref{BVPonM} for given $\psi$}\big\}.
\end{split}
\end{equation*}
This map transforms the Dirichlet data on $\vartheta$ for solutions  into the Neumann ones. It is well-defined for all $\mu$ that do not belong to the spectrum of the operator
\begin{equation*}
    D=\!\varrho^{-1}(-\Delta_M+b),\,\, \dmn D=\!\big\{\phi\in W_2^2(M)\colon \phi=0 \text{ on } \vartheta,\, \partial_\nu\phi=0 \text{ on } \partial M\setminus\vartheta\big\}.
\end{equation*}
For real $\mu$, the operator  $\Theta(\mu)$ is self-adjoint in $L_2(\vartheta)$, bounded from below and has compact resolvent \cite[Th.3.1]{ArendtMazzeo}.
For $k=1,\dots,K$,  we will denote by $\Theta_k(\mu)$ and $D_k$ the Dirichlet-to-Neumann map and the operator $D$ respectively for the case when $M=\Omega_k$, $\vartheta=\gamma$, $b=V_k$, and $\varrho=h_k:=\rho_k^\eps$.

We introduce the operator
\begin{equation*}
\cD=D_1\oplus\cdots\oplus D_K.
\end{equation*}
If $\lambda\not\in\sigma(\cD)$, then the operator
\begin{equation}\label{OperatorThetaH}
  \Theta(\lambda)=\Theta_1(\lambda)+\cdots+\Theta_K(\lambda)
\end{equation}
is well-defined. Moreover, $\Theta(\lambda)$ is self-adjoint in $L_2(\gamma)$, bounded from below and has compact resolvent as it is the sum of operators $\Theta_k(\lambda)$, each of which has these properties. We introduce the set
\begin{equation*}
 \Sigma_\Theta=\big\{\lambda\in \Real\colon  \ker\Theta(\lambda)\neq\{0\}\big\}.
\end{equation*}

Assume that $\lambda$ is an eigenvalue of $\cD$ of multiplicity $r(\lambda)$, and $r_k$ is the multiplicity of $\lambda$ in the spectrum of $D_k$.  Obviously, $r=r_1+\cdots+r_K$. Let  $U_{\lambda, k}$ be the corresponding eigenspace in $L_2(h_k,\Omega_k)$. If $\lambda\not\in \sigma(D_k)$ for some $k$, then $r_k=0$ and the space $U_{\lambda, k}$ is trivial. We consider the subspace
\begin{equation*}
  N_k(\lambda)=\big\{\partial_{\nu_k}u|_\gamma\colon u\in U_{\lambda, k}\big\}
\end{equation*}
of $L_2(\gamma)$ consisting of normal derivatives on $\gamma$ of all the eigenfunctions from $U_{\lambda, k}$. Since linearly independent eigenfunctions give rise to linearly independent normal derivatives on $\gamma$, we have
$\dim N_k(\lambda)=r_k$.
Let us introduce the sum of these spaces
\begin{equation*}
    N(\lambda)=N_1(\lambda)+\cdots+ N_K(\lambda)
\end{equation*}
 and the subset
\begin{equation*}
  \Sigma_{\cD}=\big\{\lambda\in\sigma(\cD) \colon \dim N(\lambda)<r(\lambda) \big\}.
\end{equation*}

\begin{theorem}\label{LemmaSpectrumB}
The spectrum of $\cB$ has the following properties:
\begin{enumerate}
  \item[\rm{(i)}]  $\sigma(\cB)=\Sigma_\Theta\cup\Sigma_{\cD}$.
  \item[\rm{(ii)}]  If $\lambda\in\Sigma_{\cD}$, then $\lambda$ is an eigenvalue of the operator $\cB$ with multiplicity at least $r(\lambda)-\dim N(\lambda)$.
\end{enumerate}
\end{theorem}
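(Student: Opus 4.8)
The plan is to analyze the eigenvalue equation $\mathcal{B}\phi = \lambda\phi$ by splitting $\phi$ according to its Dirichlet data $\psi := \phi|_\gamma \in L_2(\gamma)$ (which makes sense since $\phi$ is continuous on $\Omega^*$), and to treat the two regimes $\lambda\notin\sigma(\cD)$ and $\lambda\in\sigma(\cD)$ separately, because the Dirichlet-to-Neumann operator $\Theta(\lambda)$ is only defined off $\sigma(\cD)$.

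First, suppose $\lambda\notin\sigma(\cD)$. I would show that $\lambda\in\sigma(\cB)$ if and only if $\lambda\in\Sigma_\Theta$, i.e.\ $\ker\Theta(\lambda)\neq\{0\}$. For the ``only if'' direction: if $\phi\in\dmn\cB$ solves $\mathcal{B}\phi=\lambda\phi$, then each component $\phi_k$ solves the boundary value problem \eqref{BVPonM} on $\Omega_k$ with $M=\Omega_k$, $\vartheta=\gamma$, $b=V_k$, $\varrho=\rho_k^\eps$, $\mu=\lambda$, and Dirichlet data $\psi=\phi|_\gamma$; since $\lambda\notin\sigma(D_k)$, this $\phi_k$ is the \emph{unique} such solution, so $\partial_{\nu_k}\phi_k|_\gamma=\Theta_k(\lambda)\psi$, and the Kirchhoff condition $\cK\phi=0$ becomes exactly $\Theta(\lambda)\psi=\sum_k\Theta_k(\lambda)\psi=0$. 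If $\phi\not\equiv0$ then $\psi\not\equiv0$ (otherwise each $\phi_k$ would solve \eqref{BVPonM} with zero data, forcing $\phi_k=0$ by $\lambda\notin\sigma(D_k)$), so $\psi\in\ker\Theta(\lambda)\setminus\{0\}$. Conversely, given $\psi\in\ker\Theta(\lambda)\setminus\{0\}$, solve \eqref{BVPonM} on each $\Omega_k$ to get $\phi_k$; the assembled $\phi=\{\phi_1,\dots,\phi_K\}$ satisfies the PDE, the Neumann condition on $\Gamma$, the continuity condition on $\gamma$ (all traces equal $\psi$), and $\cK\phi=\Theta(\lambda)\psi=0$. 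The one point needing care is elliptic regularity: I must argue $\phi_k\in W_2^2(\Omega_k)$ so that $\phi\in\dmn\cB$. This follows from interior and boundary elliptic estimates for the Laplace--Beltrami operator on the $C^\infty$ surface $\Omega_k$ with the mixed Dirichlet/Neumann data (Dirichlet $\psi$ on $\gamma$ with $\psi$ in the appropriate trace space since it comes from $\dmn\Theta(\lambda)$, Neumann on $\Gamma_k$), together with the assumed smoothness of $\Gamma_k$; I would invoke the regularity theory cited via \cite{MitreaTaylor}. Thus $\phi\in\ker(\mathcal{B}-\lambda)\setminus\{0\}$, establishing (i) restricted to $\lambda\notin\sigma(\cD)$.

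Next, suppose $\lambda\in\sigma(\cD)$, say with multiplicity $r(\lambda)=r_1+\cdots+r_K$ and eigenspaces $U_{\lambda,k}\subset L_2(h_k,\Omega_k)$. Here the decoupling via $\Theta$ fails, but I can still produce eigenfunctions of $\cB$ by hand. Consider the map $T:\bigoplus_k U_{\lambda,k}\to L_2(\gamma)$ sending $(u_1,\dots,u_K)\mapsto \big(\,\partial_{\nu_1}u_1|_\gamma-\partial_{\nu_2}u_2|_\gamma,\ \partial_{\nu_2}u_2|_\gamma-\partial_{\nu_3}u_3|_\gamma,\ \dots\big)$ — more precisely I want tuples whose components all have the \emph{same} trace on $\gamma$ (namely trace zero, since $u_k\in U_{\lambda,k}\subset\dmn D_k$ means $u_k|_\gamma=0$, so continuity on $\Omega^*$ is automatic) and whose normal derivatives sum to zero on $\gamma$. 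So the relevant object is the linear map $S:\bigoplus_k U_{\lambda,k}\to L_2(\gamma)$, $S(u_1,\dots,u_K)=\sum_k\partial_{\nu_k}u_k|_\gamma$, and $\ker S$ consists of exactly those $(u_1,\dots,u_K)$ for which $\phi:=\{u_1,\dots,u_K\}$ lies in $\dmn\cB$ (continuity is free, $\cK\phi=0$ by construction, $\partial_n\phi=0$ on $\Gamma$ since each $u_k\in\dmn D_k$, and $W_2^2$ regularity since $u_k\in W_2^2(\Omega_k)$) and satisfies $\mathcal{B}\phi=\lambda\phi$. The image of $S$ lies in $N_1(\lambda)+\cdots+N_K(\lambda)=N(\lambda)$, and since $\dim N_k(\lambda)=r_k$ (linearly independent eigenfunctions give linearly independent normal traces), the rank-nullity theorem gives $\dim\ker S\ge \sum_k r_k-\dim N(\lambda)=r(\lambda)-\dim N(\lambda)$. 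Hence if $\lambda\in\Sigma_\cD$, i.e.\ $\dim N(\lambda)<r(\lambda)$, then $\ker S\neq\{0\}$, so $\lambda$ is an eigenvalue of $\cB$ of multiplicity at least $r(\lambda)-\dim N(\lambda)$; this proves (ii) and shows $\Sigma_\cD\subset\sigma(\cB)$.

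It remains to check the reverse inclusion $\sigma(\cB)\subset\Sigma_\Theta\cup\Sigma_\cD$, i.e.\ that an eigenvalue $\lambda\in\sigma(\cB)$ with $\lambda\in\sigma(\cD)$ but $\lambda\notin\Sigma_\cD$ (so $\dim N(\lambda)=r(\lambda)$) is impossible, and likewise that the first case already exhausts the eigenvalues off $\sigma(\cD)$. For $\lambda\notin\sigma(\cD)$ this was done above. For $\lambda\in\sigma(\cD)\setminus\Sigma_\cD$: if $\phi\in\dmn\cB$ solves $\mathcal{B}\phi=\lambda\phi$, decompose each $\phi_k$ relative to the boundary value problem \eqref{BVPonM}; because $\lambda\in\sigma(D_k)$, the solution with data $\psi=\phi|_\gamma$ is determined only up to addition of an element of $U_{\lambda,k}$, so the obstruction to solvability plus the Kirchhoff condition land in the quotient $L_2(\gamma)/N(\lambda)$, and the assumption $\dim N(\lambda)=r(\lambda)$ (combined with a Fredholm/compatibility count for \eqref{BVPonM} at the resonant value) forces $\psi=0$ and then $\phi\in\bigoplus_k U_{\lambda,k}$ with $\sum_k\partial_{\nu_k}\phi_k|_\gamma=0$; but then $\phi$ itself, having all components with zero trace on $\gamma$, reproduces $\lambda$ only through $\ker S$, which has dimension $r(\lambda)-\dim N(\lambda)=0$, a contradiction with $\phi\neq0$. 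This completes (i).

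\emph{Main obstacle.} I expect the genuinely delicate point to be the resonant case $\lambda\in\sigma(\cD)$ in the reverse inclusion: making precise the Fredholm alternative for \eqref{BVPonM} at an eigenvalue of $D_k$ — identifying the solvability condition for the Dirichlet problem with nonzero data at a resonance, and checking that the resulting compatibility conditions on $\gamma$ are exactly captured by the spaces $N_k(\lambda)$ — together with the bookkeeping that glues the $K$ local quotients into the single condition $\dim N(\lambda)$ versus $r(\lambda)$. The decoupled case, by contrast, is essentially a clean application of uniqueness for \eqref{BVPonM} plus elliptic regularity.
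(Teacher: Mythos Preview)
Your approach and the paper's are essentially the same: both use the Dirichlet-to-Neumann map $\Theta(\lambda)$ for the nonresonant case and build $\cB$-eigenfunctions from linear combinations of $\cD$-eigenfunctions in the resonant case. Your rank--nullity argument via the map $S$ is a tidy repackaging of the paper's explicit construction of coefficients $\alpha_{kj}$ satisfying $\sum_{k,j}\alpha_{kj}\zeta_{kj}=0$; the content of (ii) and of $\Sigma_\cD\subset\sigma(\cB)$ is identical.

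The one organizational difference is in the forward inclusion $\sigma(\cB)\subset\Sigma_\Theta\cup\Sigma_\cD$. You split by $\lambda\notin\sigma(\cD)$ versus $\lambda\in\sigma(\cD)$, which is what forces you into the Fredholm-at-resonance argument you flag as the main obstacle. The paper instead splits by whether \emph{some} eigenfunction $u\in U_\lambda$ of $\cB$ has $u|_\gamma=0$: if yes, that very $u$ is already a $\cD$-eigenfunction, and the Kirchhoff relation $\cK u=0$ is directly a nontrivial linear dependence among the generators $\zeta_{kj}$ of $N(\lambda)$, giving $\dim N(\lambda)<r(\lambda)$, hence $\lambda\in\Sigma_\cD$; if no, the paper invokes $\Theta(\lambda)$ and gets $\lambda\in\Sigma_\Theta$. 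This dichotomy bypasses the Fredholm bookkeeping entirely.

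That said, your instinct about the delicate spot is accurate. The paper's ``otherwise'' branch writes $\partial_{\nu_k}u_k|_\gamma=\Theta_k(\lambda)\zeta$, which tacitly assumes $\lambda\notin\sigma(\cD)$---something not established by the hypothesis ``no $\cB$-eigenfunction at $\lambda$ has zero trace.'' Your Fredholm sketch aims at precisely this gap, but as written (``compatibility count \dots\ forces $\psi=0$'') it is not a proof: since $\phi_k$ with trace $\psi$ is \emph{given}, there is no solvability obstruction to exploit; one would instead have to argue that the Kirchhoff constraint $\cK\phi=0$ cannot be met for any $\psi\neq0$ when $\dim N(\lambda)=r(\lambda)$. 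Both arguments leave the degenerate situation $\lambda\in\sigma(\cB)\cap(\sigma(\cD)\setminus\Sigma_\cD)$ not fully resolved.
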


\begin{proof}
  \text{(i)} We first prove that $\sigma(\cB)\subset\Sigma_\Theta\cup\Sigma_{\cD}$.   Let $\lambda$ be an eigenvalue of $\cB$ with eigenspace $U_\lambda$. All functions of $U_\lambda$ are solutions of the problem
 \begin{align}\label{ResPrbEq}
 -&\Delta_\Omega u+(V-\lambda h)u=0\;\;\text{in }\Omega,\quad \partial_nu=0\;\;\text{on } \Gamma,\\\label{ResPrbKirch}
 &u_1=u_2=\cdots=u_K,\quad \cK u=0\;\;\text{on } \gamma.
\end{align}
If there exists a non-zero vector $u\in U_\lambda$ such that $u=0$ on $\gamma$, then $u$ is an eigenfunction of $\cD$ with the eigenvalue $\lambda$. Note that the dimension of $N(\lambda)$ cannot exceed $r(\lambda)$, and we have the non-trivial linear combination $\sum_{k=1}^K\partial_{\nu_k}u_k=0$ in this space. Hence $\dim N(\lambda)<r(\lambda)$, and finally $\lambda\in \Sigma_{\cD}$. Otherwise, the trace $\zeta=u|_\gamma$ differs from zero for all non-trivial functions $u\in U_\lambda$. Then
\begin{equation*}
  \cK u=\sum_{k=1}^K\partial_{\nu_k}u_k\big|_\gamma=\sum_{k=1}^K\Theta_k(\lambda)\zeta=\Theta(\lambda)\zeta=0.
\end{equation*}
Hence   $\lambda\in \Sigma_{\Theta}$.

Now we prove the inverse inclusion $\Sigma_\Theta\cup\Sigma_{\cD}\subset\sigma(\cB)$. Assume that $\lambda\in \Sigma_{\Theta}$ and $\zeta$ is a non-zero function belonging to $\ker\Theta(\lambda)$. Let us consider the collection $u=\{u_1,\dots,u_K\}$, where $u_k$ are solutions of the problems
\begin{equation}\label{ProblemUk}
 -\Delta_{\Omega_k} z+(V_k-\lambda h_k) z= 0\;\;\text{in }\Omega_k,\quad \partial_n z=0\;\;\text{on } \Gamma_k, \quad z=\zeta\;\;\text{on } \gamma
\end{equation}
for $k=1,\dots,K$. Then $u$ is an eigenfunction of $\cB$ with the eigenvalue $\lambda$, because $u_1=\cdots=u_K=\zeta$ and $\cK u=\Theta(\lambda)\zeta=0$. Hence, $\lambda\in \sigma(\cB)$.

Next, we suppose that $\lambda\in \Sigma_{\cD}$, i.e., $\lambda$ is an eigenvalue of $\cD$ with multiplicity $r$ such that $\dim N(\lambda)<r$. Let $r_k$ be the multiplicity of $\lambda$ in the spectrum of $D_k$.  If $u_{k1},\dots,u_{k\hskip.6pt r_k}$ are the eigenfunctions of $D_k$ that form a basis in $U_{\lambda, k}$, then the functions $\zeta_{kj}=\partial_{\nu_k}u_{kj}|_{\gamma}$, $j=1,\dots,r_k$, form a basis in $N_k(\lambda)$.
In total, we have $r$ such functions $\zeta_{kj}$ in $N(\lambda)$. If $\dim N(\lambda)<r$, then there exists a non-trivial linear combination
\begin{equation}\label{LinearComb}
  \sum_{k=1}^K\sum_{j=1}^{r_k}\alpha_{kj}\zeta_{kj}=0
\end{equation}
for some constants $\alpha_{kj}$. If we set
\begin{equation*}
  v_k=\sum_{j=1}^{r_k}\alpha_{kj}u_{kj},
\end{equation*}
then $v=\{v_1,\dots,v_K\}$ is an eigenfunction of $\cB$.
Indeed, the functions $v_k$ solve \eqref{ProblemUk} with $\zeta=0$ as a linear combination of eigenfunctions of $D_k$. The continuity condition in \eqref{ResPrbKirch} holds since all $v_k$ vanish on $\gamma$. Next, we have
\begin{equation*}
  \cK v= \sum_{k=1}^K\partial_{\nu_k}v_k=\sum_{k=1}^K\sum_{j=1}^{r_k}\alpha_{kj}\partial_{\nu_k}u_{kj}
  =\sum_{k=1}^K\sum_{j=1}^{r_k}\alpha_{kj}\zeta_{kj}=0,
\end{equation*}
by \eqref{LinearComb}. Hence $\lambda$ is an eigenvalue of $\cB$.

\text{(ii)} If $\dim N(\lambda)=d$, then there exist exactly $r-d$ linearly independent vectors $\alpha=(\alpha_{11},\dots,\alpha_{K,r_K})$ for which \eqref{LinearComb} holds. Therefore we can construct at least $r-d$ linearly independent eigenfunctions of $\cB$. 
\end{proof}

To conclude this section, we recall once again that all the properties of  $\cB$  are also the properties of operators $\cA_\eps$ for a fixed $\eps$.

\section{Formal asymptotics and the limit operator. The case $m=1$}\label{SecAsymp}

In this section, using asymptotic expansions, we will construct a limit operator whose spectrum is the set of limit points for the eigenvalues of \eqref{PertPrblEq}-\eqref{PertPrblKirh} as the small parameter $\eps$ goes to zero.

\subsection{Asymptotics of eigenvalues and eigenfunctions}

We look for the  approximation, as $\eps\to 0$, to an eigenvalue $\lambda^\eps$ and the corresponding eigenfunction $u_\eps$ of \eqref{PertPrblEq}-\eqref{PertPrblKirh} in the form
\begin{align}\label{AsymptoticsL}
&\lambda^\eps=\lambda+o(1),\\\label{AsymptoticsV}
&u^\eps(x)= u(x)+o(1)&&\text{for }x\in \Omega\setminus \omega^\eps,\\\label{AsymptoticsW}
&u^\eps(x)= v(\eps^{-1}y,x_3)+\eps w(\eps^{-1}y,x_3)+o(\eps)&&\text{for }x=(\eps^{-1}y,x_3)\in \omega^\eps.
\end{align}
The function  $u^\eps$ solves \eqref{PertPrblEq} and satisfies \eqref{PertPrblNC} for all $\eps>0$. Since the set $\omega^\eps$ shrinks to $\gamma$ as $\eps\to 0$, the function $u$  must be a solution of the equation
\begin{equation}\label{LimitProblemEq}
 -\Delta_\Omega u+Vu=\lambda \rho u \quad \text{in } \Omega
\end{equation}
that satisfies the boundary condition
\begin{equation}\label{LimitProblemGamma}
\partial_n u=0\quad \text{on } \Gamma.
\end{equation}
Of course, $u$ must also fulfill  appropriate transmission conditions on $\gamma$.
To find these conditions, we will examine equation \eqref{PertPrblEq} in a vicinity of $\gamma$.

The metric in $\omega^\eps_k$ is the Euclidean one, so the Laplace-Beltrami operator $\Delta_{\Omega_k}$ becomes $\partial^2_{y_k}+\partial^2_{x_3}$. In the coordinates $(t,s)$, equation \eqref{PertPrblEq} has the form
\begin{equation*}
  -\eps^{-2}\partial_t^2 u^\eps-\partial_s^2 u^\eps+V(\eps t,s)u^\eps=\lambda^\eps\eps^{-1} q(t,s)u^\eps\quad\text{in }\omega.
\end{equation*}
Here $\partial_t^2$ is the second order derivative along edges of $G$.
Substituting \eqref{AsymptoticsW} into the latter equation and collecting the terms with the same powers of $\eps$ yield
\begin{equation}\label{EqnsVW}
  \partial_t^2v=0,\qquad -\partial_t^2w=\lambda q(t,s)v.
\end{equation}
Obviously, both the functions $v$ and $w$ satisfy Kirchhoff's coupling conditions on $\gamma$:
\begin{align}\nonumber
  &v_1(0,s)=\cdots=v_K(0,s), \quad \sum_{k=1}^K \partial_{t_k}v_k(0,s) =0, \\ \label{KirchW}
  &w_1(0,s)=\cdots=w_K(0,s), \quad
  \sum_{k=1}^K \partial_{t_k}w_k(0,s)=0.
\end{align}

To match the approximations on $\partial\omega^\eps$, we write  $u$ in the local coordinates $(t,s)$:
\begin{align*}
u_k(\eps,s)&=v_k(1,s)+\eps w_k(1,s)+o(\eps),\\ \partial_{\nu_k}u_k(\eps,\tau)&=\eps^{-1}\partial_{t_k}v_k(1,s)+\partial_{t_k}w_k(1,s)+o(1),
\end{align*}
as $\eps\to 0$. Then we have
\begin{gather}\label{FittingUkVk}
  u_k(0,s)=v_k(1,s),\\\label{FittingPdVpdW}
  \partial_{t_k}v_k(1,s)=0,\\\label{FittingPdVk0}
 \partial_{\nu_k}u_k(0,s)=\partial_{t_k}w_k(1,s)
\end{gather}
for all $k=1,\dots,K$. Applying \eqref{PertPrblNC} we also deduce that
\begin{equation}\label{FiitingN}
  \partial_s v(t,0)=0,\quad\partial_s v(t,l)=0,\quad\partial_s w(t,0)=0,\quad\partial_s w(t,l)=0.
\end{equation}

Denote by $\partial G$ the set of vertices $\{a_1,\dots,a_K\}$.
Collecting \eqref{EqnsVW}-\eqref{KirchW}, \eqref{FittingPdVpdW}, and \eqref{FittingPdVk0}, we can now form the problems for $v$ and $w$. The first is the homogeneous boundary value problem in star graph $G$ for the second derivative $\partial^2_t$ depending on parameter $s$:
\begin{align}\label{PrbVEq}
  -&\partial_t^2v=0\;\;\text{in }G\times \gamma,\quad
   \partial_tv=0\;\;\text{on }\partial G\times \gamma,\\\label{PrbVKirch}
  &v_1=\cdots=v_K, \quad \sum_{k=1}^K \partial_{t_k}v_k(0,\cdot) =0 \;\;\text{on }\gamma.
\intertext{The problem for $w$ is the same but already non-homogeneous:}
\label{PrbWEq}
-&\partial_t^2w=\lambda qv\;\;\text{in }G\times \gamma, \quad \partial_tw=\partial_\nu u\;\;\text{on }\partial G\times\gamma, \\\label{PrbWKirch}
 &w_1=\cdots=w_K, \quad \sum_{k=1}^K \partial_{t_k}w_k(0,\cdot) =0 \;\;\text{on }\gamma.
\end{align}
Here, $\partial_tw=\partial_\nu u$ is an abbreviation for the set of conditions \eqref{FittingPdVk0}.

For a fixed $s\in (0,l)$, problem \eqref{PrbVEq}-\eqref{PrbVKirch} has only constant solutions (see \cite{BerkolaikoKuchmentBook}, for details concerning ODE on metric graphs). We put $v(t,s)=\alpha(s)$ and assume that $\alpha\in W_2^{3/2}(\gamma)$, $\alpha'(0)=\alpha'(l)=0$, because of \eqref{FiitingN}.
In view of \eqref{FittingUkVk}, we now obtain
\begin{equation}\label{LimitProblemContinU}
  u_1(0,s)=u_2(0,s)=\cdots=u_K(0,s)=\alpha(s),
\end{equation}
that is to say $u$ must be continuous on $\Omega^*$. So, $v(t,s)=u(0,s)$.

Problem \eqref{PrbWEq}-\eqref{PrbWKirch} is ge\-ne\-rally unsolvable, because the corresponding homogeneous problem has non-trivial solutions.  We will find its solvability conditions, which will simultaneously be another coupling condition on~$u$. Now equation \eqref{PrbWEq} can be written as $-\partial_t^2w=\lambda q(t,s)u(0,s)$. Let us multiply  this equation by an arbitrary function $\phi\in C_0^\infty(\gamma)$ and integrate over $\omega$:
\begin{equation}\label{IntEqW}
  -\int_{\omega} \partial^2_tw(t,s)\overline\phi(s)\,dt\,ds=\lambda\int_{\omega}q(t,s)u(0,s)\overline\phi(s)\,dt\,ds.
\end{equation}
Both sides can be simplified as follows. Integrating by parts yields
\begin{multline*}
  \int_{\omega} \partial^2_tw(t,s)\overline\phi(s)\,dt\,ds=\sum_{k=1}^K\int_0^l\overline\phi(s) ds\int_0^1\partial^2_{t_k}w_k(t,s)\,dt\\
  =\sum_{k=1}^K\int_0^l \left(\partial_{t_k}w_k(1,s)-\partial_{t_k}w_k(0,s)\right)\overline\phi(s)\,ds\\
  =\int_0^l \sum_{k=1}^K\partial_{\nu_k}u_k(0,s)\overline\phi(s)\,ds-\int_0^l\sum_{k=1}^K\partial_{t_k}w_k(0,s)\overline\phi(s)\,ds=\int_\gamma \cK u\,\overline\phi\,d\ell.
\end{multline*}
Above we have used \eqref{PrbWEq} and \eqref{PrbWKirch}.
Next, we write
\begin{equation*}
 \int_{\omega}q(t,s)u(0,s)\overline\phi(s)\,dt\,ds=\int_\gamma \kappa u(0,\cdot)\overline\phi\,d\ell,
\end{equation*}
where the function
\begin{equation}\label{Kappa}
  \kappa(s)=\int_Gq(t,s)\,dt
\end{equation}
describes the total mass of the graph $G_s=G\times\{s\}$. The integral over the graph is the sum of integrals over edges, i.e.,
\begin{equation}\label{KappaAsSum}
\kappa(s)=  \int_Gq(t,s)\,dt=\sum_{k=1}^K\int_{e_k}q_k(t_k,s)\,dt_k.
\end{equation}
Then identity \eqref{IntEqW} becomes
\begin{equation*}
  \int_\gamma (\cK u+\lambda \kappa u)\overline\phi\,d\ell=0\qquad\text{for all } \phi \in C_0^\infty(\gamma).
\end{equation*}
Finally, we get the last condition
\begin{equation}\label{LimitProblemKu}
  \cK u+\lambda \kappa u=0\quad\text{on } \gamma
\end{equation}
on the function $u$, for which we need to formulate the problem.
Combining \eqref{LimitProblemEq}, \eqref{LimitProblemGamma},  \eqref{LimitProblemContinU} and \eqref{LimitProblemKu}, we obtain the limit eigenvalue problem
\begin{align}\label{LimitPrbEqN}
   -&\Delta_\Omega u+Vu=\lambda \rho u \quad \text{in } \Omega,\qquad
   \partial_n u=0\quad \text{on } \Gamma,\\\label{LimitPrbKirch}
   &u_1=u_2=\cdots=u_K,\qquad \cK u+\lambda \kappa u=0\quad\text{on } \gamma,
\end{align}
for the leading terms $\lambda$ and $u$  of asymptotics \eqref{AsymptoticsL} and \eqref{AsymptoticsV}.

\subsection{Properties of the limit operator}

\eqref{LimitPrbEqN}-\eqref{LimitPrbKirch} is a spectral problem where the spectral parameter appears in both, the partial differential equation and the junction condition along $\gamma$. Below,
 we will construct some matrix operator associated with the problem. Let us introduce the space $\cL=L_2(\rho,\Omega)\times L_2(\kappa,\gamma)$ with the inner product
\begin{equation*}
  (\mbu,\mbu)_{\cL}=\int_\Omega\rho|u|^2\,dS+\int_\gamma\kappa|\zeta|^2\,d\ell,
\end{equation*}
for $\mbu=(u,\zeta)^T$ a $2\times 1$ vector function belonging to $\cL$. In this space, we consider the operator
\begin{equation}\label{LimitOp}
  \cA\mbu=
  \begin{pmatrix}
    \rho^{-1}(-\Delta_\Omega u+Vu) \\
    -\kappa^{-1}\cK u
  \end{pmatrix}
\end{equation}
that is defined on the subspace
\begin{equation*}
  \dmn \cA=\big\{(u,u|_\gamma)\colon  u\in W_2^2(\Omega),\;
  u \text{ is continuous on }\Omega^*,\;  \partial_nu=0 \text{ on } \Gamma  \big\}.\\
\end{equation*}
Now problem \eqref{LimitPrbEqN}-\eqref{LimitPrbKirch} can be written in the form
\begin{equation*}
  \cA\mbu=\lambda\mbu, \qquad \mbu\in \dmn\cA.
\end{equation*}

The study of the spectra of the operators $\cA$ and $\cB$ is similar. Therefore, we will only point out some differences without repeating ourselves. Here and subsequently, the operators $D_k$, $\cD$ and $\Theta(\lambda)$ refer to the definitions provided in Section~\ref{SecOperatorB} for the case where $h=\rho$.
Let us introduce the set
\begin{equation*}
 \Lambda_\Theta=\left\{\lambda\in \Real\colon \ker(\Theta(\lambda)+\lambda\kappa I)\neq \{0\}\right\},
\end{equation*}
where $I$ is the identity operator on $L_2(\gamma)$.

\begin{theorem}\label{ThLimitSpectrum}
The spectrum of $\cA$ has the following properties:
\begin{enumerate}
  \item[\rm{(i)}]  It is real discrete, bounded from below, and it  consists of eigenvalues with finite multiplicity.

\item[\rm{(ii)}]   $\sigma(\cA)=\Lambda_\Theta\cup\Sigma_{\cD}$.

 \item[\rm{(iii)}]   If $\lambda\in\Sigma_{\cD}$, then $\lambda$ is an eigenvalue of the operator $\cA$ with multiplicity at least $r(\lambda)-\dim N(\lambda)$.
\end{enumerate}
\end{theorem}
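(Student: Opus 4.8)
The plan is to follow the template already established for the operator $\cB$ in Lemma~\ref{LemmaBselfadjoint} and Theorem~\ref{LemmaSpectrumB}, since, as the text observes, the analysis of $\cA$ is nearly identical to that of $\cB$, with the only change being that the spectral parameter also sits on the junction line $\gamma$. For part (i), I would first show that $\cA$ is self-adjoint in $\cL=L_2(\rho,\Omega)\times L_2(\kappa,\gamma)$ by a Green's formula computation: for $\mbu=(u,u|_\gamma)^T$ in $\dmn\cA$ and a test vector $\mbv=(v,\zeta)^T$, compute $(\cA\mbu,\mbv)_\cL-(\mbu,\cA^*\mbv)_\cL$ using the same integration by parts over each $\Omega_k$ as in Lemma~\ref{LemmaBselfadjoint}; the key point is that the boundary terms on $\gamma$ now combine the $-\Delta_\Omega u$ contribution with the $-\kappa^{-1}\cK u$ component paired against $\kappa\zeta$, and one reads off that the weakest conditions forcing symmetry are exactly $\partial_n v=0$ on $\Gamma$, $v$ continuous on $\Omega^*$, and $\zeta=v|_\gamma$, i.e.\ $\dmn\cA^*=\dmn\cA$. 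Boundedness from below follows as before from $V\in L^\infty$ and the first Green identity: $(\cA\mbu,\mbu)_\cL=\int_\Omega(|\nabla u|^2+V|u|^2)\,dS\ge -c\int_\Omega|u|^2\,dS\ge -(c/\rho_{\min})\|\mbu\|_\cL^2$, noting the $L_2(\kappa,\gamma)$ part of the norm only helps. Discreteness: the resolvent maps $\cL$ into $\dmn\cA$, whose first component lies in $W_2^2(\Omega)$ and whose second component is its trace on $\gamma$, hence is compact by the compact embedding $W_2^2(\Omega)\hookrightarrow L_2(\Omega)$ together with compactness of the trace map $W_2^2(\Omega)\to L_2(\gamma)$; therefore $\sigma(\cA)$ is real, discrete, bounded below, with finite-multiplicity eigenvalues.

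For part (ii), the argument mirrors the proof of Theorem~\ref{LemmaSpectrumB}(i) verbatim except that the operator $\Theta(\lambda)$ is replaced by $\Theta(\lambda)+\lambda\kappa I$. For the inclusion $\sigma(\cA)\subset\Lambda_\Theta\cup\Sigma_\cD$: if $\lambda$ is an eigenvalue with eigenfunction $u$, either some nontrivial $u$ in the eigenspace vanishes on $\gamma$, in which case $u$ is an eigenfunction of $\cD=D_1\oplus\cdots\oplus D_K$ (the operators $D_k$ here taken with weight $\rho$) and, since $\cK u=0$ forces a nontrivial relation among the $\partial_{\nu_k}u_k|_\gamma$, we get $\dim N(\lambda)<r(\lambda)$, so $\lambda\in\Sigma_\cD$; or every nontrivial eigenfunction has nonzero trace $\zeta=u|_\gamma$, and then the junction condition $\cK u+\lambda\kappa u=0$ reads $\sum_k\Theta_k(\lambda)\zeta+\lambda\kappa\zeta=(\Theta(\lambda)+\lambda\kappa I)\zeta=0$, so $\lambda\in\Lambda_\Theta$. (One should remark, as is implicit in Section~\ref{SecOperatorB}, that an eigenvalue in $\sigma(\cD)$ at which $\Theta$ is not defined is handled by the first alternative.) For the reverse inclusion: if $\lambda\in\Lambda_\Theta$ pick $0\ne\zeta\in\ker(\Theta(\lambda)+\lambda\kappa I)$, solve the $K$ Dirichlet/Neumann problems \eqref{ProblemUk} (with $h_k=\rho_k$) to get $u_k$ with $u_k|_\gamma=\zeta$, and check continuity holds and $\cK u=\Theta(\lambda)\zeta=-\lambda\kappa\zeta$, so $\mbu=(u,\zeta)^T\in\dmn\cA$ satisfies $\cA\mbu=\lambda\mbu$; if $\lambda\in\Sigma_\cD$, the same construction of $v_k=\sum_j\alpha_{kj}u_{kj}$ from the nontrivial combination \eqref{LinearComb} of normal derivatives produces an eigenfunction vanishing on $\gamma$, hence $\mbu=(v,0)^T\in\dmn\cA$ with $\cA\mbu=\lambda\mbu$. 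Part (iii) is then immediate and identical to Theorem~\ref{LemmaSpectrumB}(ii): the space of admissible coefficient vectors $\alpha$ has dimension $r(\lambda)-\dim N(\lambda)$, each giving a linearly independent eigenfunction.

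I expect the only genuinely new bookkeeping—and thus the main (mild) obstacle—to be the Green's formula for self-adjointness of $\cA$: one has to be careful that the $\gamma$-boundary terms produced by integrating $-\Delta_\Omega u$ by parts over the $\Omega_k$ combine correctly with the inner product weight $\kappa$ on $L_2(\kappa,\gamma)$ acting on the second component, so that the cross terms $\int_\gamma\cK u\,\overline\zeta\,d\ell$ and $-\int_\gamma u|_\gamma\,\overline{\cK v}\,d\ell$ cancel precisely when $\zeta=v|_\gamma$ and $v$ is continuous; the factor $\kappa$ is cancelled by the $\kappa^{-1}$ in the definition \eqref{LimitOp} of the second row of $\cA$, and the weight $\kappa$ in the inner product, so there is no genuine difficulty, only care. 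Everything else transfers from Section~\ref{SecOperatorB} with $h=\rho$ and $\Theta(\lambda)\rightsquigarrow\Theta(\lambda)+\lambda\kappa I$, which is why I would state the proof briefly, pointing only to these differences, exactly as the paragraph preceding the theorem suggests.
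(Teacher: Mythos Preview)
Your proposal is correct and follows essentially the same approach as the paper: establish self-adjointness via the Green's formula identity \eqref{AuvuAvFinal}, bound from below exactly as you wrote, deduce compactness of the resolvent from the embedding $\dmn\cA\subset W_2^2(\Omega)\times W_2^{3/2}(\gamma)\subset\cL$, and then refer to the proof of Theorem~\ref{LemmaSpectrumB} with the replacement $\Theta(\lambda)\rightsquigarrow\Theta(\lambda)+\lambda\kappa I$ for parts (ii) and (iii). Your anticipation that the only new bookkeeping is the cancellation of $\kappa$ in the $\gamma$-terms is exactly right, and the paper indeed presents the proof just as briefly as you suggest.
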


\begin{proof}
First we prove that $\cA$ is self-adjoint, bounded from below, and has a compact resolvent.
Suppose $\mbu\in\dmn\cA$. An easy computation shows that
\begin{equation}\label{AuvuAvFinal}
  (\cA\mbu,\mbv)_\cL-(\mbu,\cA^*\mbv)_\cL
  =\sum_{k=1}^K\int_{\Gamma_k} u_k \,\partial_n\overline{v_k}\,d\ell
  +\sum_{k=1}^K\int_{\gamma} \partial_{\nu_k}u_k (\overline{v_k}-\overline{\eta})\,d\ell,
\end{equation}
for any $\mbv=(v,\eta)^T\in \cL$, provided $v$ belongs to $W_2^2(\Omega)$.
If we suppose that $\partial_nv=0$ on $\Gamma$, the function $v$ is continuous on $\Omega^*$ and $\eta=v|_\gamma$, then the right hand side of \eqref{AuvuAvFinal} vanishes for all $\mbu\in\dmn\cA$.  Furthermore, this is the largest class of vectors $\mbv$  for which this is true.   Hence, $\dmn \cA=\dmn \cA^*$ and $\cA^*$ is self-adjoint.

Next, we have
 \begin{multline*}
 (\cA\mbu,\mbu)_\cL=\int_\Omega(-\Delta_\Omega u+Vu)\overline{u}\,dS-\int_{\gamma}\cK u\,\overline{u}\,d\ell
 =\int_{\Omega}\left(|\nabla u|^2+V|u|^2\right)\,dS\\
 \ge-c\int_{\Omega}|u|^2\,dS
 \ge-\frac{c}{\rho_{min}}\left(\int_{\Omega}\rho|u|^2\,dS+\int_\gamma\kappa|u|^2\,d\ell\right)
 =-\frac{c}{\rho_{min}}\|\mbu\|_\cL^2
 \end{multline*}
for all $\mbu\in\dmn\cA$, where $\rho_{min}=\min_{\Omega} \rho$ and $c$ is a positive constant such that $V(x)\ge -c$ for almost all  $x\in \Omega$. Hence $\cA$ is bounded from below.

The resolvent of $\cA$ is a bounded operator from $\cL$ to $\dmn\cA$. This resolvent is compact as an operator in $\cL$ since $\dmn\cA\subset W_2^2(\Omega)\times W_2^{3/2}(\gamma)\subset \cL$
and the last inclusion is compact.

The rest of the proof runs in the same way as in Theorem~\ref{LemmaSpectrumB}. 
\end{proof}

Two different types of eigenvibrations correspond to the parts $\Lambda_\Theta$ and $\Sigma_{\cD}$ of $\sigma(\cA)$. If $\lambda\in \Sigma_{\cD}$, then the corresponding eigenvector has the form $\mbu_\lambda =(u,0)^T$ and the connection curve $\gamma$ remains unmoved in those vibrations. However, if $\lambda\in \Lambda_\Theta$, then $\mbu_\lambda =(u,\zeta)^T$, where $\zeta$ is a non-trivial solution of the equation
\begin{equation*}
  (\Theta(\lambda)+\lambda\kappa)\zeta=0.
\end{equation*}
This implies that $\gamma$ is involved in the system's vibrations.
Our mathematical model can describe the eigenvibrations of many mechanical systems with complex geometry. For instance, Figure~\ref{FigPropellers} depicts turbine blades and various propellers.
From a physics perspective, the first type would illustrate the oscillation of lighter blades with a fixed shaft, while the second type would be the vibration that also propagates to the shaft.

\begin{figure}[ht]
  \centering
  \includegraphics[scale=0.3]{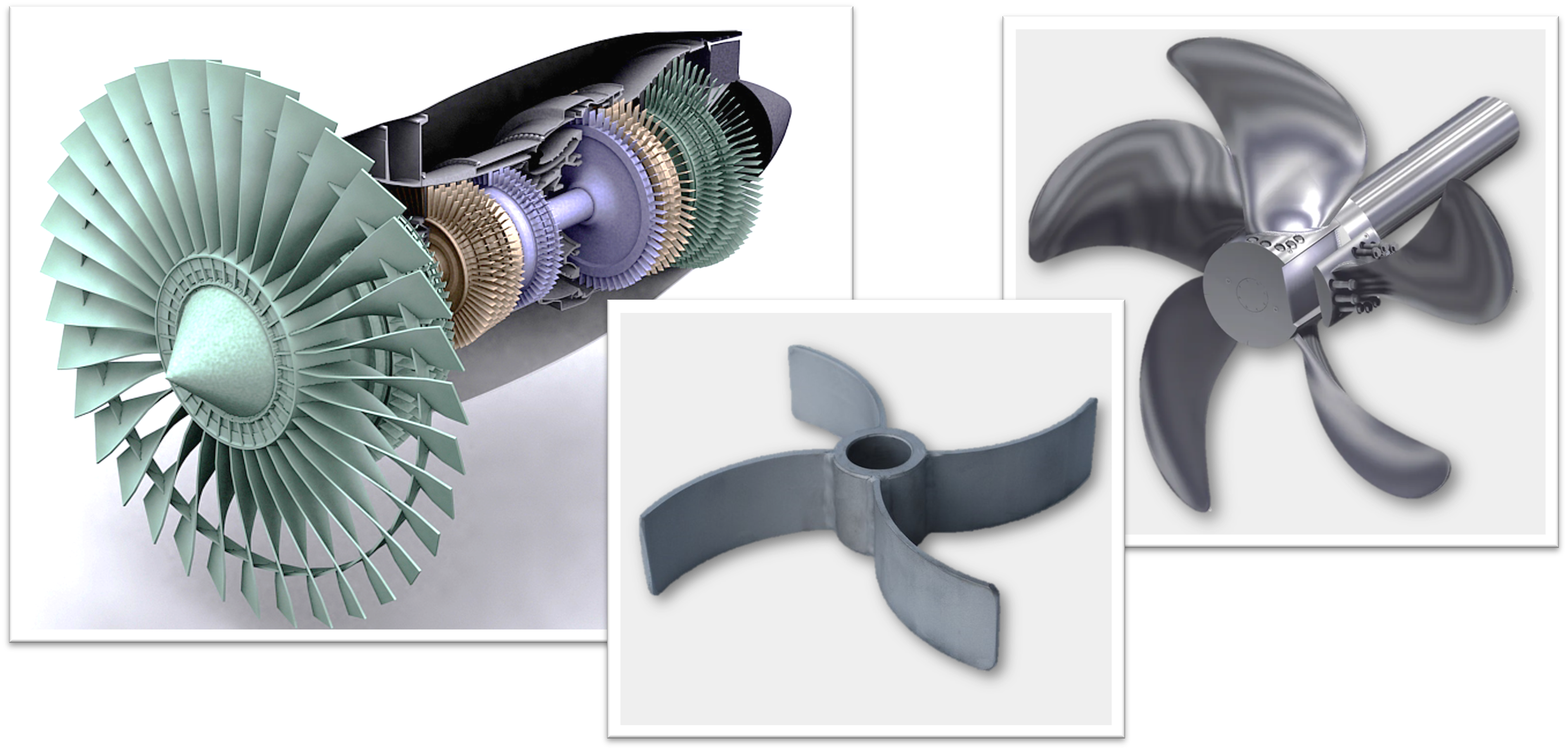}\\
  \caption{Turbines and propellers}\label{FigPropellers}
\end{figure}

\section{Convergence of spectra in the case $m=1$}\label{SecM1}

In this section, we will show that the spectra of the perturbed operators $\cA_\eps$ converge as $\eps\to 0$ to the spectrum of $\cA$. However, each operator $\cA_\eps$ acts in its own Hilbert space $L_2(\rho^\eps,\Omega)$ with the norm depending on the small parameter $\eps$. Therefore, it is convenient to study the convergence of the spectra in terms of the convergence of quadratic forms.

Suppose the potential $V$ is  positive in $\Omega$ and introduce the Hilbert space
\begin{equation*}
  \cH=\{\phi\in W_2^1(\Omega)\colon \phi \mbox{ is continuous in } \Omega^*\}
\end{equation*}
with the inner product $\la\phi,\psi\ra=\int_{\Omega}(\nabla \phi\cdot\nabla \overline{\psi}+V\phi\overline{\psi})\,dS$ and the norm $\|\phi\|=\la\phi,\phi \ra^{1/2}$.
We also define the sesqui\-li\-near forms
\begin{align*}
&a_\eps(\phi,\psi)=\int_{\Omega\setminus\omega^\eps}\rho \phi\overline{\psi}\,dS+\eps^{-m}\int_{\omega^\eps}q^\eps \phi\overline{\psi}\,dS,\\
&a(\phi,\psi)=\int_{\Omega}\rho \phi\overline{\psi}\,dS+\int_\gamma \kappa \phi\overline{\psi}\,d\ell
\end{align*}
acting on the space $\cH$. These forms are associated with compact, self-adjoint operators $A_\eps$ and $A$ in $\cH$ defined as follows
$A_\eps \,  :\, \cH \to \cH, $ $A_\eps \phi=u_\eps$ where $u_\eps$ is the solution of
$$\la u_\eps, \psi \ra =a_\eps(\phi, \psi) \quad \mbox{for all } \psi\in \cH,$$
$A \, :\, \cH \to \cH, $ $A\phi=u$ where $u$ is the solution of
$$\la u, \psi \ra =a(\phi, \psi) \quad \mbox{ for all } \psi\in \cH.$$
In this way, we also have
\begin{equation*}
  \la A_\eps \phi,\psi\ra=a_\eps(\phi,\psi), \qquad \la A \phi,\psi \ra=a(\phi,\psi)\quad \text{for all } \phi,\psi\in\cH
\end{equation*}
(see \cite[III.1]{OleinikBook}, for details).
Then, spectral problems \eqref{PertPrblEq}-\eqref{PertPrblKirh} and \eqref{LimitPrbEqN}-\eqref{LimitPrbKirch} can be written as
\begin{equation*}
  \lambda^\eps A_\eps u^\eps= u^\eps, \qquad \lambda Au=u,
\end{equation*}
respectively.

\begin{theorem}\label{TheoremMequals1}
Let $\{\lambda_j^\eps\}_{j=1}^\infty$ be the increasing sequence of eigenvalues of \eqref{PertPrblEq}-\eqref{PertPrblKirh} for $m=1$, taking multiplicity into account. For problem \eqref{LimitPrbEqN}-\eqref{LimitPrbKirch}, the same sequence of eigenvalues is denoted by  $\{\lambda_j\}_{j=1}^\infty$. Assume the potential $V$ is positive in $\Omega$. Then, for any $n$, we have
\begin{equation*}
  |\lambda^\eps_j-\lambda_j|\le C_j\eps^{1/2}
\end{equation*}
with  some $C_j>0$.
\end{theorem}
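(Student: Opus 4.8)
The plan is to reduce the comparison of the eigenvalues $\lambda_j^\eps$ and $\lambda_j$ to a comparison of the compact self-adjoint operators $A_\eps$ and $A$ acting on the single fixed Hilbert space $\cH$, and then to invoke the classical min-max/perturbation estimate: if $\|A_\eps-A\|_{\cH\to\cH}\le\delta_\eps$ then the $j$-th eigenvalues $\mu_j^\eps$, $\mu_j$ of $A_\eps$, $A$ satisfy $|\mu_j^\eps-\mu_j|\le\delta_\eps$, and since $\lambda_j^\eps=1/\mu_j^\eps$, $\lambda_j=1/\mu_j$ with the $\mu_j$ bounded away from $0$ on each fixed index, this yields $|\lambda_j^\eps-\lambda_j|\le C_j\delta_\eps$. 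So the whole theorem comes down to proving the operator-norm bound $\|A_\eps-A\|_{\cH\to\cH}\le C\eps^{1/2}$.

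To estimate $\|A_\eps-A\|$, I would use the identity $\la (A_\eps-A)\phi,\psi\ra=a_\eps(\phi,\psi)-a(\phi,\psi)$ and bound the right-hand side by $C\eps^{1/2}\|\phi\|\,\|\psi\|$ uniformly over $\phi,\psi\in\cH$. Splitting the difference of forms,
\begin{equation*}
 a_\eps(\phi,\psi)-a(\phi,\psi)=\Big(\eps^{-1}\!\int_{\omega^\eps}\!q^\eps\phi\overline\psi\,dS-\int_\gamma\kappa\,\phi\overline\psi\,d\ell\Big)-\int_{\omega^\eps}\!\rho\,\phi\overline\psi\,dS.
\end{equation*}
The second (``lost mass'') term is easy: $|\int_{\omega^\eps}\rho\phi\overline\psi\,dS|\le\|\rho\|_\infty\|\phi\|_{L_2(\omega^\eps)}\|\psi\|_{L_2(\omega^\eps)}$, and a trace/Poincaré-type estimate on the thin rectangles $\omega_k^\eps$ of width $\eps$ gives $\|\phi\|_{L_2(\omega^\eps)}^2\le C\eps\|\phi\|_{W_2^1(\Omega)}^2\le C\eps\|\phi\|^2$, producing an $O(\eps)$ bound. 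For the first term, rescale $\omega^\eps$ to $\omega$ via $y=\eps t$; then $\eps^{-1}\int_{\omega^\eps}q^\eps\phi\overline\psi\,dS=\int_\omega q(t,s)\,\phi(\eps t,s)\overline{\psi(\eps t,s)}\,dt\,ds$, and subtracting $\int_\gamma\kappa\phi|_\gamma\overline{\psi|_\gamma}\,d\ell=\int_\omega q(t,s)\,\phi(0,s)\overline{\psi(0,s)}\,dt\,ds$ leaves $\int_\omega q\,[\phi(\eps t,s)\overline{\psi(\eps t,s)}-\phi(0,s)\overline{\psi(0,s)}]\,dt\,ds$. Writing this as $\int_\omega q\,(\phi(\eps t,s)-\phi(0,s))\overline{\psi(\eps t,s)}\,dt\,ds+\int_\omega q\,\phi(0,s)(\overline{\psi(\eps t,s)}-\overline{\psi(0,s)})\,dt\,ds$ and using $\phi(\eps t,s)-\phi(0,s)=\int_0^{\eps t}\partial_y\phi\,dy$ together with the Cauchy--Schwarz inequality and the trace inequality $\|\phi(0,\cdot)\|_{L_2(\gamma)}\le C\|\phi\|_{W_2^1(\Omega)}$ gives a bound of order $\eps^{1/2}\|\phi\|\,\|\psi\|$; this $\eps^{1/2}$, coming from $\|\phi(\eps\cdot,\cdot)-\phi(0,\cdot)\|_{L_2(\omega)}\le C\eps^{1/2}\|\partial_y\phi\|_{L_2(\omega^\eps)}\cdot\eps^{-1/2}$-type scaling, is exactly the rate in \eqref{low1}.

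The main obstacle is making the thin-domain trace and difference estimates rigorous and $\eps$-uniform for $W_2^1$ functions on the curved surfaces $\Omega_k$ near $\gamma$ (not merely on the flat rectangles $\omega_k^\eps$, where the metric is Euclidean): one needs a uniform extension/trace theory for the Lipschitz domains $\Omega_k$ so that traces on $\gamma$ and on $\partial\omega^\eps$ are controlled by $\|\cdot\|_{W_2^1(\Omega_k)}$ with constants independent of $\eps$, and one needs the anisotropic scaling estimate $\|g(\eps\cdot,\cdot)-g(0,\cdot)\|_{L_2(\omega)}\le C\eps^{1/2}\|g\|_{W_2^1(\omega^\eps)}$ to hold with the right power. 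Everything else—the reduction to operator norms, the $1/\mu\mapsto\lambda$ conversion, and the bookkeeping of the index-dependent constant $C_j$—is standard spectral perturbation theory as in \cite{OleinikBook}. I would therefore organize the proof as: (1) recall the abstract eigenvalue comparison lemma; (2) prove $\|A_\eps-A\|\le C\eps^{1/2}$ via the form identity and the two thin-domain estimates above; (3) translate back to $\lambda_j^\eps$, $\lambda_j$ and read off $C_j$.
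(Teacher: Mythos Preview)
Your proposal is correct and follows essentially the same route as the paper: the paper also splits $a_\eps(\phi,\phi)-a(\phi,\phi)$ into the ``lost mass'' term $\int_{\omega^\eps}\rho|\phi|^2\,dS$ and the term $\eps^{-1}\int_{\omega^\eps}q^\eps|\phi|^2\,dS-\int_\gamma\kappa|\phi|^2\,d\ell$, handles them by the thin-rectangle estimate $\|\phi\|_{L_2(\omega^\eps)}^2\le c\eps\|\phi\|^2$ and a Newton--Leibniz/trace argument yielding $O(\eps^{1/2})$, and then invokes the same spectral perturbation lemma from \cite{OleinikBook}. The only cosmetic differences are that the paper works with the quadratic form rather than the bilinear one (equivalent for self-adjoint operators) and that your worry about curvature is moot here, since by assumption $\omega^\eps$ is flat with Euclidean metric.
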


Let us first prove some auxiliary estimates.

\begin{proposition}\label{PropositionL2small}
There exists a constant $c>0$ such that
\begin{equation*}
\int_{\omega^\eps}|\phi|^2\,dS\leq c \eps \|\phi\|^2
\end{equation*}
for all $\phi\in\cH$.
\end{proposition}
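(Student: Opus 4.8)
The plan is to prove the trace-type inequality
$\int_{\omega^\eps}|\phi|^2\,dS\le c\eps\|\phi\|^2$ by reducing it to a one-dimensional estimate on each of the thin rectangles $\omega^\eps_k$, and then summing over $k$. The key point is that the width of $\omega^\eps_k$ in the $t$-direction is $\eps$, so an $L^2$-norm over such a strip should pick up a factor $\eps$ relative to an $L^2$-norm over a fixed surface, provided we can also control the trace of $\phi$ on $\gamma$.

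First I would fix $k$ and work in the local coordinates $(y_k,x_3)\in(0,\eps)\times(0,l)$ on $\omega^\eps_k$, writing $\phi_k(y_k,x_3)$. For almost every $x_3$, the fundamental theorem of calculus gives, for $0<y_k<\eps$,
\begin{equation*}
\phi_k(y_k,x_3)=\phi_k(0,x_3)+\int_0^{y_k}\partial_{y_k}\phi_k(\tau,x_3)\,d\tau,
\end{equation*}
hence $|\phi_k(y_k,x_3)|^2\le 2|\phi_k(0,x_3)|^2+2\eps\int_0^\eps|\partial_{y_k}\phi_k(\tau,x_3)|^2\,d\tau$ by Cauchy--Schwarz. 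Integrating in $y_k$ over $(0,\eps)$ and then in $x_3$ over $(0,l)$ yields
\begin{equation*}
\int_{\omega^\eps_k}|\phi_k|^2\,dS\le 2\eps\int_0^l|\phi_k(0,x_3)|^2\,dx_3+2\eps^2\int_{\omega^\eps_k}|\nabla\phi_k|^2\,dS
\le 2\eps\,\|\phi_k|_\gamma\|_{L^2(\gamma)}^2+2\eps^2\|\nabla\phi_k\|_{L^2(\omega^\eps_k)}^2.
\end{equation*}
Summing over $k=1,\dots,K$, using that $\phi$ is continuous on $\Omega^*$ so that all traces $\phi_k|_\gamma$ coincide with the common trace $\phi|_\gamma$, and that $\sum_k\|\nabla\phi_k\|_{L^2(\omega^\eps_k)}^2\le\|\nabla\phi\|_{L^2(\Omega)}^2$, I get
\begin{equation*}
\int_{\omega^\eps}|\phi|^2\,dS\le 2K\eps\,\|\phi|_\gamma\|_{L^2(\gamma)}^2+2\eps^2\|\nabla\phi\|_{L^2(\Omega)}^2.
\end{equation*}

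To finish, I need to bound both terms by $c\eps\|\phi\|^2$. The second term is immediate since $\eps^2\le\eps$ for small $\eps$ and $\|\nabla\phi\|_{L^2(\Omega)}^2\le\|\phi\|^2$ (recall $\|\phi\|^2=\int_\Omega(|\nabla\phi|^2+V|\phi|^2)\,dS$ with $V>0$). The first term requires the trace inequality $\|\phi|_\gamma\|_{L^2(\gamma)}^2\le C\,\|\phi\|_{W_2^1(\Omega)}^2$, which holds because $\gamma$ is a Lipschitz curve in $\partial\Omega_k$ and the trace operator $W_2^1(\Omega_k)\to L^2(\partial\Omega_k)$ is bounded; combined with positivity of $V$, which makes $\|\cdot\|$ equivalent to the full $W_2^1(\Omega)$-norm on $\cH$, this gives $\|\phi|_\gamma\|_{L^2(\gamma)}^2\le C'\|\phi\|^2$. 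Putting the pieces together produces the claim with $c=2KC'+2$.

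The main obstacle, such as it is, is the justification of the trace estimate in a form uniform over the stratified structure: one must invoke the standard trace theorem on each Lipschitz surface $\Omega_k$ and note that the continuity condition on $\Omega^*$ makes the various one-sided traces agree, so there is no ambiguity in $\|\phi|_\gamma\|_{L^2(\gamma)}$. Everything else is the elementary one-dimensional Poincaré-type computation above; the factor $\eps$ (rather than $\eps^2$) comes precisely from the boundary term at $y_k=0$, which is the dominant contribution and cannot be improved in general.
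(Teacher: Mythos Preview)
Your proof is correct and follows essentially the same route as the paper: both argue via the one-dimensional fundamental-theorem-of-calculus inequality on each rectangle $\omega_k^\eps$, bound the boundary term at $y_k=0$ by the trace theorem on $\Omega_k$, and sum over $k$. The only cosmetic differences are that the paper works first with $\phi\in\cH\cap C^1(\Omega)$ and passes to general $\phi$ by density, and that the paper integrates $2y_k$ over $(0,\eps)$ to obtain the gradient coefficient $\eps^2$ rather than bounding $y_k\le\eps$ first (your $2\eps^2$); neither affects the argument.
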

\begin{proof}
 Let $\phi\in \cH\cap C^1(\Omega)$  where $C^1(\Omega)=\{u|_{\Omega_k}\, : \, u\in C^1(\overline{\Omega_k})\}$ (cf. \eqref{SumSpace}). For any $k=1,2,\ldots K$, we have
\begin{multline*}
|\phi_k(y_k,s)|^2\leq 2 |\phi_k(0,s)|^2+ 2  \Big| \int_0^{y_k}\kern-4pt\partial_{y_k} \phi_k(\tau,s) \, d \tau \Big|^2 \\
\leq 2 |\phi_k(0,s)|^2+ 2  y_k\int_0^{\eps}\kern-2pt |\partial_{y_k} \phi_k|^2 \, d\tau,
\end{multline*}
where $(y_k,s)\in \omega_k^\eps=(0,\eps)\times(0,l)$.
Integrating over $\omega_k^\eps$ and using the Trace Theorem, we get
\begin{multline*}
\int_{\omega^\eps_k}|\phi_k|^2\,dS= \int_{0}^{\eps} \int_0^l |\phi_k(\tau,s)|^2\,ds\,d\tau\\
\leq 2 \eps \|\phi_k\|_{L^2(\gamma)}^2 +\eps^2 \|\nabla \phi_k \|_{L^2(\omega_k^\eps)}^2
\leq c_k\eps \|\phi_k\|_{W^1_2(\Omega_k)}^2.
\end{multline*}
Now let us add all $K$ inequalities.  This completes the proof, since $C^1(\overline{\Omega_k})$ is dense in $W_2^1(\Omega_k)$. 
\end{proof}

\begin{proposition}\label{PropositionDeltaOnGamma}
There exists a positive constant $C$, independent of $\eps$, such that
\begin{equation*}
  \left|\eps^{-1}\int_{\omega^\eps}q^\eps |\phi|^2\,dS-\int_\gamma \kappa |\phi|^2\,d\ell\right|\le C \eps^{1/2}\|\phi\|^2
\end{equation*}
for all $\phi\in\cH$, where $\kappa$ is defined by \eqref{Kappa}.
\end{proposition}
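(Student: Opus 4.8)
The plan is to reduce, exactly as in Proposition~\ref{PropositionL2small}, to functions $\phi\in\cH\cap C^1(\Omega)$ and then work component by component on each band $\omega_k^\eps=(0,\eps)\times(0,l)$, passing to the stretched variable $t_k=\eps^{-1}y_k\in(0,1)$. In these coordinates $\eps^{-1}\int_{\omega_k^\eps}q_k^\eps|\phi_k|^2\,dS=\int_0^l\int_0^1 q_k(t_k,s)\,|\phi_k(\eps t_k,s)|^2\,dt_k\,ds$, while the target term is $\int_0^l\big(\int_0^1 q_k(t_k,s)\,dt_k\big)|\phi_k(0,s)|^2\,ds$ after splitting $\kappa$ as in \eqref{KappaAsSum}. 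So the difference is $\int_0^l\int_0^1 q_k(t_k,s)\big(|\phi_k(\eps t_k,s)|^2-|\phi_k(0,s)|^2\big)\,dt_k\,ds$, and everything hinges on controlling $|\phi_k(\eps t_k,s)|^2-|\phi_k(0,s)|^2$ in $L^1$ of the band.

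First I would write $|\phi_k(\eps t_k,s)|^2-|\phi_k(0,s)|^2=\int_0^{\eps t_k}\partial_{y_k}\big(|\phi_k(\tau,s)|^2\big)\,d\tau=2\,\Re\int_0^{\eps t_k}\overline{\phi_k(\tau,s)}\,\partial_{y_k}\phi_k(\tau,s)\,d\tau$, so that, using $|q_k|\le\|q\|_\infty$ and $\eps t_k\le\eps$,
\begin{equation*}
  \left|\eps^{-1}\int_{\omega_k^\eps}q_k^\eps|\phi_k|^2\,dS-\int_\gamma\kappa_k|\phi_k|^2\,d\ell\right|
  \le 2\|q\|_\infty\int_0^l\int_0^\eps |\phi_k(\tau,s)|\,|\partial_{y_k}\phi_k(\tau,s)|\,d\tau\,ds,
\end{equation*}
where $\kappa_k(s)=\int_{e_k}q_k(t_k,s)\,dt_k$. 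By Cauchy--Schwarz this is at most $2\|q\|_\infty\,\|\phi_k\|_{L^2(\omega_k^\eps)}\,\|\nabla\phi_k\|_{L^2(\omega_k^\eps)}$. The gradient factor is bounded by $\|\phi_k\|_{W_2^1(\Omega_k)}$, and the $L^2(\omega_k^\eps)$ factor is bounded by $(c_k\eps)^{1/2}\|\phi_k\|_{W_2^1(\Omega_k)}$ thanks to the estimate already established in Proposition~\ref{PropositionL2small}. Hence each component contributes $O(\eps^{1/2})\|\phi_k\|_{W_2^1(\Omega_k)}^2$, and summing over $k=1,\dots,K$ and using that $V$ is positive so that $\|\phi_k\|_{W_2^1(\Omega_k)}^2\le C\|\phi\|^2$ (equivalence of norms coming from the definition of $\|\cdot\|$ on $\cH$) gives the claimed bound $C\eps^{1/2}\|\phi\|^2$. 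Finally, density of $C^1(\overline{\Omega_k})$ in $W_2^1(\Omega_k)$, together with continuity in $\cH$ of all the quantities involved (the left side is continuous because the trace map $\cH\to L_2(\gamma)$ and the restriction $\cH\to L^2(\omega^\eps)$ are bounded), extends the inequality to all $\phi\in\cH$.

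The only genuinely delicate point is making sure the trace on $\gamma$ that appears after stretching is the same common trace $\phi|_\gamma$ used in the definition of $\kappa|\phi|^2$ on $\gamma$: since $\phi\in\cH$ is continuous on $\Omega^*$, all $\phi_k(0,s)$ agree and equal $\phi|_\gamma(s)$, so $\sum_k\kappa_k(s)|\phi_k(0,s)|^2=\kappa(s)|\phi|_\gamma(s)|^2$ and the componentwise estimates indeed add up to the stated one; there is no real obstacle here beyond bookkeeping. Everything else is the routine Cauchy--Schwarz plus the already-proven $O(\eps)$ smallness of $\|\phi\|_{L^2(\omega^\eps)}^2$, so I expect no serious difficulty in carrying this out.
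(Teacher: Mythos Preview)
Your proof is correct and follows essentially the same approach as the paper: reduce to $C^1$ functions componentwise, apply the fundamental theorem of calculus to $|\phi_k|^2$, bound $q$ in $L^\infty$, use Cauchy--Schwarz on $\omega_k^\eps$, and invoke Proposition~\ref{PropositionL2small} for the $\eps^{1/2}$ gain. The only cosmetic difference is that you pass to the stretched variable $t_k$ at the outset, whereas the paper works in the original coordinate $y_k$ and integrates over $(0,\eps)$ at the end; the estimates are otherwise identical.
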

\begin{proof}
As in the previous proposition, it suffices to prove that the estimates
\begin{equation*}
  \left|\eps^{-1}\int_{\omega^\eps_k}q^\eps_k |\psi|^2\,dS-\int_\gamma \kappa_k |\psi|^2\,d\ell\right|\le C_k \eps^{1/2}\|\psi\|^2_{W_2^1(\Omega_k)}
\end{equation*}
hold for all $\psi\in C^1(\overline{\Omega}_k)$ and $k=1,\dots,K$. Here $\psi=\phi_k$ and $\kappa_k(s)=\int_{e_k}q_k(t_k,s)\,dt_k$ (see \eqref{KappaAsSum}).
Let us multiply the obvious equality
\begin{equation}\label{NLforU2}
  |\psi(y_k,s)|^2-|\psi(0,s)|^2=\int_{0}^{y_k} \partial_{y_k} |\psi(\tau, s)|^2\, d\tau
\end{equation}
by the weight function $q_k^\eps(y_k,s)=q_k (\eps^{-1}y_k,s)$ and integrate  along $\gamma$. Then
\begin{multline}\label{EstimateOnGamma}
\left|\int_0^l q_k \big(\eps^{-1}y_k,s) (|\psi(y_k,s)|^2-|\psi(0,s)|^2\big) \, ds\right|
\\
\leq c_1 \int_0^l  \int_{0}^{y_k} \left|\partial_{y_k} |\psi(\tau, s)|^2\right| \, d\tau ds
\leq c_2\int_0^l    \int_{0}^{y_k} |\psi(\tau, s)|\,|\partial_{y_k} \psi(\tau, s)| \, d\tau ds
\\
\leq  c_3 \|\psi\|_{L^2(\omega_k^\eps)}\|\nabla \psi\|_{L^2(\omega_k^\eps)}\leq C \eps^{1/2}\|\psi\|^2,
\end{multline}
in view of Proposition~\ref{PropositionL2small}. We note that
\begin{equation*}
\int_\gamma \kappa_k |\psi|^2\,d\ell=\eps^{-1}\int_0^\eps\int_0^lq_k (\eps^{-1}y_k,s)|\psi(0,s)|^2\,ds\, dy_k.
\end{equation*}
From \eqref{EstimateOnGamma} we get
\begin{multline*}
  \left|\eps^{-1}\int_{\omega^\eps_k}q^\eps_k |\psi|^2\,dS-\int_\gamma \kappa_k |\psi|^2\,d\ell\right|\\
  \leq
  \eps^{-1}\left|\int_0^\eps\int_0^lq_k \big(\eps^{-1}y_k,s) (|\psi(y_k,s)|^2-|\psi(0,s)|^2\big) \,ds\, dy_k\right|\\
 \leq \eps^{-1}\left|\int_0^\eps  C \eps^{1/2}\|\psi\|^2\, dy_k\right|
 \leq  C\eps^{1/2}\|\psi\|^2,
\end{multline*}
which completes the proof. 
\end{proof}

\noindent
{\em Proof of Theorem~\ref{TheoremMequals1}.}
Applying Propositions \ref{PropositionL2small} and \ref{PropositionDeltaOnGamma} yields
\begin{equation*}
    |a_\eps (\phi,\phi)-a(\phi,\phi)|\le \!
    \int_{\omega^\eps}\rho |\phi|^2\,dS+\left|\eps^{-1}\!\!\!\int_{\omega^\eps}q^\eps |\phi|^2\,dS-\!\!\int_\gamma \kappa |\phi|^2\,d\ell\right|\leq c_1\eps^{1/2}\|\phi\|^2
\end{equation*}
for all $\phi\in\cH$.
The latter inequality implies that $A_\eps$ converge to $A$ in the norm and, moreover,
$\|A_\eps-A\|\le c_1 \eps^{1/2}$.
Therefore we conclude that
\begin{equation*}
  \left|\frac1{\lambda^\eps_j}-\frac1{\lambda_j}\right|\le c_j\eps^{1/2},
\end{equation*}
(cf. \cite[III.1]{OleinikBook}) and hence $\lambda^\eps_j\to \lambda_j$ as $\eps\to 0$,  and finally that
\begin{equation*}
  |\lambda^\eps_j-\lambda_j|\le c_j|\lambda_j||\lambda^\eps_j|\eps^{1/2}\le 2c_j|\lambda_j|^2\eps^{1/2}\le C_j\eps^{1/2}
\end{equation*}
for all natural $j$. \qed

\begin{remark}\rm
The operators introduced in Section~\ref{SecOperatorB}, $\cA_\eps$, and the operators generated by forms, $A_\eps$,
share the same set of eigenfunctions. Additionally, the map $\lambda\mapsto \lambda^{-1}$ is a bijection between their spectra. Indeed, all eigenfunctions of $A_\eps$ have higher smoothness and actually belong to the space $W_2^2(\Omega)$. In this case, any eigenfunction $u^\eps$ with an eigenvalue $(\lambda^\eps)^{-1}$ of $A_\eps$
is also an eigenfunction of $\cA_\eps$ with the eigenvalue $\lambda^\eps$ and vice versa, because any weak solution (in the sense of the variational statement) is a strong one.
\end{remark}

\section{Low frequency eigenvibrations in the case $m>1$}\label{SecM>1}

Problem \eqref{PertPrblEq}-\eqref{PertPrblKirh} concerns the eigenvibrations of a propeller with a heavy propeller shaft and relatively light blades. The total mass $M_\eps$ that is concentrated on the shaft has the following asymptotics:
 \begin{equation*}
   M_\eps=\eps^{-m}\int_{\omega^\eps}q^\eps \,dS=\eps^{1-m}\left(\int_\gamma \kappa \,d\ell+o(1)\right), \text{ as } \eps\to 0.
 \end{equation*}
When $m=1$, this mass was finite, but now it goes to infinity as $\eps\to 0$. It is easily seen that
\begin{equation*}
  |a_\eps(\phi,\phi)|\leq c \eps^{1-m}\|\phi\|^2
\end{equation*}
if $m>1$, and
$\|A_\eps\|=O(\eps^{1-m})$ as $\eps\to 0$.
However, the operators $\eps^{m-1}A_\eps$ converge for every $m>1$ and  the limit operator does not depend on $m$. We will define this operator as follows.

Let us consider the eigenvalue problem
\begin{align}\label{LimitPrbEqNLow}
   -&\Delta_\Omega u+Vu=0 \quad \text{in } \Omega,\qquad
   \partial_n u=0\quad \text{on } \Gamma,\\\label{LimitPrbKirchLow}
   &u_1=u_2=\cdots=u_K,\qquad \cK u+\lambda \kappa u=0\quad\text{on } \gamma,
\end{align}
which is similar to \eqref{LimitPrbEqN}-\eqref{LimitPrbKirch}, but in it the weight function $\rho$ is zero.
One interesting aspect of the problem is that the spectral parameter $\lambda$ only appears in the boundary condition.
The operator's eigenfunctions describe the \textit{low frequency eigenvibrations}, which refer to the vibrations of a propeller with weightless blades when all the mass of this vibrating system is concentrated on the propeller shaft. This is best seen in the case of $K=2$, when the stratified set $\Omega^*$ turns into a domain $\Omega\subset\Real^2$ divided by the curve $\gamma$ into two parts, and problem \eqref{LimitPrbEqNLow}-\eqref{LimitPrbKirchLow} can be written as
  \begin{equation*}
    -\Delta u+Vu=\lambda \kappa\delta_\gamma u  \text{ in } \Omega, \quad\partial_n u=0 \text{ on } \partial\Omega,
  \end{equation*}
where the mass density of the vibrating system is Dirac's distribution
\begin{equation*}
  \kappa\delta_\gamma(\psi)=\int_\gamma\kappa\psi\,d\ell, \qquad \text{for all } \psi\in C^\infty_0(\Omega),
\end{equation*}
with the support on $\gamma$.

We will denote by $\Theta$ the operator $\Theta(\lambda)$ from \eqref{OperatorThetaH} in the case when $h=0$.
This operator transforms the Dirichlet data $\zeta$  for the solutions $v_k$ of the problems
\begin{equation}\label{ProblemUkZero}
 -\Delta_{\Omega_k} v_k+V_kv_k = 0\;\;\text{in }\Omega_k,\;\; \partial_n v_k=0\;\;\text{on } \Gamma_k, \;\; v_k=\zeta\;\;\text{on } \gamma,\quad k=1,\dots,K
\end{equation}
to the sum on the normal derivatives $\cK v=\sum_{k=1}^K\partial_{\nu_k}v_k$. This Dirichlet-to-Neumann map is well defined if the corresponding operator $\cD$ is invertible, i.e. all problems \eqref{ProblemUkZero} have only  trivial solutions for $\zeta=0$. Then the  condition $\cK u+\lambda \kappa u=0$ can be written as
\begin{equation*}
  \Theta \zeta+\lambda \kappa \zeta=0.
\end{equation*}

\begin{theorem}\label{Th4}
  If the problem
  \begin{equation}\label{PrbB00}
  -\Delta_\Omega u+Vu=0  \text{ in } \Omega,\quad
   \partial_n u=0\text{ on } \Gamma,\quad
   u=0 \text{ on } \gamma
\end{equation}
has only a trivial solution, then the set of eigenvalues of problem \eqref{LimitPrbEqNLow}-\eqref{LimitPrbKirchLow} coincides with the spectrum of $-\kappa^{-1}\Theta$. This spectrum is real, discrete and consists of eigenvalues with finite multiplicity.
\end{theorem}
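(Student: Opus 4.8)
The plan is to reduce the eigenvalue problem \eqref{LimitPrbEqNLow}-\eqref{LimitPrbKirchLow} to the operator equation $\Theta\zeta+\lambda\kappa\zeta=0$ on $L_2(\gamma)$ and then to invoke the known properties of the Dirichlet-to-Neumann operator $\Theta$ (self-adjointness, lower semiboundedness, compact resolvent), exactly as in Theorem~\ref{LemmaSpectrumB} and Theorem~\ref{ThLimitSpectrum}, but now in the simpler setting where no part of the spectrum comes from an auxiliary ``interior'' operator. First I would observe that the hypothesis that \eqref{PrbB00} has only the trivial solution is precisely the statement that $0\notin\sigma(\cD)$ for $h=0$, so that $\cD$ is invertible and the operators $\Theta_k$ (hence $\Theta=\sum_k\Theta_k$) are well defined in the sense recalled before the statement. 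This invertibility is what makes $\Sigma_\cD$ empty in the present situation, so the whole spectrum will be captured by the $\Theta$-part alone.

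Next I would establish the equivalence between eigenpairs. Given $\lambda$ and a nontrivial $u=\{u_1,\dots,u_K\}$ solving \eqref{LimitPrbEqNLow}-\eqref{LimitPrbKirchLow}: the trace $\zeta=u|_\gamma$ cannot vanish, for if $\zeta=0$ then each $u_k$ solves \eqref{PrbB00} on $\Omega_k$ and hence $u_k=0$ by the hypothesis, contradicting nontriviality of $u$; and since $\zeta\neq0$, each $u_k$ is the unique solution of \eqref{ProblemUkZero} with that Dirichlet datum, so $\cK u=\Theta\zeta$ and the junction condition $\cK u+\lambda\kappa u=0$ becomes $\Theta\zeta+\lambda\kappa\zeta=0$, i.e.\ $\zeta\in\ker(\kappa^{-1}\Theta+\lambda I)$. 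Conversely, given $\lambda$ and a nontrivial $\zeta$ with $\kappa^{-1}\Theta\zeta=-\lambda\zeta$, one solves the $K$ problems \eqref{ProblemUkZero} to obtain $u_k$; these automatically satisfy the PDE, the Neumann condition on $\Gamma$, the continuity condition $u_1=\cdots=u_K=\zeta$ on $\gamma$, and $\cK u=\Theta\zeta=-\lambda\kappa\zeta$, i.e.\ \eqref{LimitPrbKirchLow}. Thus the eigenvalues of \eqref{LimitPrbEqNLow}-\eqref{LimitPrbKirchLow} are exactly the eigenvalues of the operator $-\kappa^{-1}\Theta$ acting in $L_2(\kappa,\gamma)$, with eigenfunction correspondence $\zeta\leftrightarrow u$, and in particular the multiplicities agree.

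Finally I would record the spectral type. The operator $-\kappa^{-1}\Theta$ is self-adjoint in the weighted space $L_2(\kappa,\gamma)$ and bounded from above (equivalently, $\kappa^{-1}\Theta$ bounded from below) with compact resolvent, since $\Theta$ has all of these properties in $L_2(\gamma)$ by \cite[Th.3.1]{ArendtMazzeo} and multiplication by the positive bounded-and-bounded-away-from-zero weight $\kappa$ is an isomorphism between $L_2(\gamma)$ and $L_2(\kappa,\gamma)$ that does not affect compactness of the resolvent; equivalently, one may see $-\kappa^{-1}\Theta$ as the operator generated by the form $-(\Theta\cdot,\cdot)_{L_2(\gamma)}$ relative to the inner product of $L_2(\kappa,\gamma)$. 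Hence its spectrum is real, discrete, and consists of eigenvalues of finite multiplicity, which is the assertion.

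I expect the only genuinely delicate point to be the well-posedness and functional-analytic bookkeeping of $\Theta$ when $h=0$: one must be sure that, under the stated hypothesis, each $\Theta_k$ is defined on a dense domain, is closed and self-adjoint with compact resolvent, and that the finite sum $\Theta=\sum_k\Theta_k$ inherits these properties (this is where \cite[Th.3.1]{ArendtMazzeo} and the remark preceding Theorem~\ref{LemmaSpectrumB} do the work). Everything else is a routine transcription of the arguments already given for $\cB$ and $\cA$, with the simplification that the hypothesis forces the ``$\Sigma_\cD$'' contribution to be absent.
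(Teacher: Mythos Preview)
Your proposal is correct and follows essentially the same route as the paper's proof: both reduce the eigenvalue problem to $\Theta\zeta+\lambda\kappa\zeta=0$ via the trace map, use the hypothesis on \eqref{PrbB00} to rule out nontrivial $u$ with $u|_\gamma=0$, construct the inverse correspondence by solving \eqref{ProblemUkZero}, and then invoke \cite[Th.3.1]{ArendtMazzeo} for the spectral properties of $\Theta$ (hence of $-\kappa^{-1}\Theta$). Your write-up is somewhat more explicit about the connection with $\Sigma_\cD$ and about the weighted-space bookkeeping, but the argument is the same.
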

\begin{proof}
The operator  $\Theta$ is self-adjoint on $L_2(\gamma)$ and has a compact resolvent \cite[Th.3.1]{ArendtMazzeo}. Therefore, $\kappa^{-1}\Theta$ also possesses these properties, and $\sigma(\kappa^{-1}\Theta)$ is a real discrete set consisting of eigenvalues with finite multiplicity.

If $\lambda$ is an eigenvalue of \eqref{LimitPrbEqNLow}-\eqref{LimitPrbKirchLow} with an eigenfunction $u$, then  $\zeta=u|_\gamma$ differs from zero, because otherwise $u$ would be a solution of \eqref{PrbB00} equal to zero.
Hence, $\lambda$ is an eigenvalue of  $-\kappa^{-1}\Theta$. It is evident that the converse statement is also true. If $\lambda\in \sigma(-\kappa^{-1}\Theta)$ and $\zeta$ is the corresponding eigenfunction, then $\lambda$ is an eigenvalue of \eqref{LimitPrbEqNLow}-\eqref{LimitPrbKirchLow} with the eigenfunction $v=\{v_1,\dots,v_K\}$, where $v_k$ are solutions of \eqref{ProblemUkZero}. 
\end{proof}

\begin{theorem}\label{TheoremLowFrequencies}
Suppose $m>1$ and the potential $V$ is  positive in $\Omega$. Let $\{\lambda_j^\eps\}_{j=1}^\infty$ be the increasing sequence of eigenvalues of \eqref{PertPrblEq}-\eqref{PertPrblKirh}, taking multiplicity into account. For problem \eqref{LimitPrbEqNLow}-\eqref{LimitPrbKirchLow}, the same sequence of eigenvalues is denoted by  $\{\lambda_j\}_{j=1}^\infty$.  Then $\eps^{1-m}\lambda^\eps_j\to\lambda_j$ as $\eps\to 0$, and
\begin{equation}\label{convvp}
  |\lambda^\eps_j-\eps^{m-1}\lambda_j|\le C_j\eps^{\alpha(m)},
\end{equation}
where $\alpha(m)=\min\{m-\frac12,2(m-1)\}$. The constant $C_j$ does not depend on $\eps$.
\end{theorem}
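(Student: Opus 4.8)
The plan is to mimic the structure of the proof of Theorem~\ref{TheoremMequals1}, working at the level of the sesquilinear forms on the fixed Hilbert space $\cH$ rather than with the $\eps$-dependent Hilbert spaces $L_2(\rho^\eps,\Omega)$. Set $B_\eps=\eps^{m-1}A_\eps$, so that the eigenvalues of $B_\eps$ are $\eps^{1-m}(\lambda_j^\eps)^{-1}$, and let $B$ be the compact self-adjoint operator in $\cH$ associated with the form $b(\phi,\psi)=\int_\gamma\kappa\phi\overline\psi\,d\ell$, i.e.\ $\la B\phi,\psi\ra=b(\phi,\psi)$; by Theorem~\ref{Th4} (invoking the hypothesis that \eqref{PrbB00} has only the trivial solution, which follows here from the positivity of $V$), the nonzero eigenvalues of $B$ are exactly the $\lambda_j^{-1}$, where $\{\lambda_j\}$ is the spectrum of $-\kappa^{-1}\Theta$. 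The first step is therefore to establish the operator-norm estimate $\|B_\eps-B\|_{\cH\to\cH}\le C\eps^{\alpha(m)}$.

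To get this, I would estimate $|\eps^{m-1}a_\eps(\phi,\psi)-b(\phi,\psi)|$ for $\phi,\psi\in\cH$. Writing $\eps^{m-1}a_\eps(\phi,\psi)=\eps^{m-1}\int_{\Omega\setminus\omega^\eps}\rho\phi\overline\psi\,dS+\eps^{-1}\int_{\omega^\eps}q^\eps\phi\overline\psi\,dS$, the first term is bounded by $C\eps^{m-1}\|\phi\|\,\|\psi\|$ (this contributes the exponent $m-1$, and in fact a slightly sharper $\eps^{m-1/2}$ bound is available by Proposition~\ref{PropositionL2small}, but $m-1$ is what matters versus the threshold), while the second term differs from $\int_\gamma\kappa\phi\overline\psi\,d\ell$ by at most $C\eps^{1/2}\|\phi\|\,\|\psi\|$ by exactly the argument of Proposition~\ref{PropositionDeltaOnGamma} (polarized from the quadratic form to the bilinear form via the standard identity, or simply re-run the estimate \eqref{EstimateOnGamma} with $|\psi|^2$ replaced by $\phi\overline\psi$). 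Hence $\|B_\eps-B\|\le C(\eps^{m-1}+\eps^{1/2})$. Here I should be careful about the two regimes: for $m>3/2$ the $\eps^{1/2}$ term dominates, for $1<m<3/2$ the $\eps^{m-1}$ term dominates, and in either case the bound is $C\eps^{\min\{m-1,1/2\}}$. I expect this is \emph{not} quite the claimed $\alpha(m)=\min\{m-\tfrac12,2(m-1)\}$ at the level of the operators $B_\eps$, and the discrepancy is absorbed in passing back to the $\lambda_j^\eps$, as explained next.

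The second step is the passage from the resolvent/eigenvalue convergence of $B_\eps\to B$ to \eqref{convvp}. From $\|B_\eps-B\|\le C\eps^{\min\{m-1,1/2\}}$ and classical spectral perturbation theory for compact self-adjoint operators (as used at the end of the proof of Theorem~\ref{TheoremMequals1}, cf.\ \cite[III.1]{OleinikBook}), the $j$-th eigenvalue of $B_\eps$, namely $\eps^{1-m}(\lambda_j^\eps)^{-1}$, satisfies $|\eps^{1-m}(\lambda_j^\eps)^{-1}-\lambda_j^{-1}|\le C_j\eps^{\min\{m-1,1/2\}}$. In particular $\eps^{1-m}\lambda_j^\eps\to\lambda_j$. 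Then one writes
\begin{equation*}
  \lambda_j^\eps-\eps^{m-1}\lambda_j
  =\eps^{m-1}\lambda_j^\eps\,\lambda_j\left(\lambda_j^{-1}-\eps^{1-m}(\lambda_j^\eps)^{-1}\right),
\end{equation*}
and since $\eps^{m-1}\lambda_j^\eps$ is bounded (converging to $\lambda_j$), the right-hand side is $O_j(\eps^{m-1}\cdot\eps^{\min\{m-1,1/2\}})=O_j(\eps^{\min\{2(m-1),\,m-1/2\}})=O_j(\eps^{\alpha(m)})$. This is precisely the claimed bound, and it explains the arithmetic of $\alpha(m)$: the extra factor $\eps^{m-1}$ coming from the scaling $\lambda_j^\eps\approx\eps^{m-1}\lambda_j$ is what turns $\min\{m-1,1/2\}$ into $\min\{2(m-1),m-1/2\}$.

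The main obstacle is not any single estimate — each is a minor variant of something already proved in Section~\ref{SecM1} — but rather the bookkeeping of which quantity the operator bound controls and the careful unwinding of reciprocals and scalings so that the two terms in $\alpha(m)$ come out correctly; in particular one must verify that the perturbation-theory step is legitimately applied to $B_\eps$ and $B$ (both compact self-adjoint on the same space $\cH$, with $\lambda_j^{-1}$ the eigenvalues of $B$ by Theorem~\ref{Th4}), and that the positivity of $V$ is what guarantees both that $\cH$ is the right space and that \eqref{PrbB00} has only the trivial solution so Theorem~\ref{Th4} applies. A secondary point worth spelling out is that $\lambda_j^\eps\ne 0$ and $\lambda_j\ne 0$ (again from positivity of $V$, which makes all the forms positive definite), so the manipulations with $\lambda^{-1}$ are valid.
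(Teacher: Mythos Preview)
Your approach is exactly the paper's: rescale to $\eps^{m-1}A_\eps$, compare with the operator $A_0$ associated to $a_0(\phi,\psi)=\int_\gamma\kappa\phi\overline\psi\,d\ell$ via Propositions~\ref{PropositionL2small} and~\ref{PropositionDeltaOnGamma} to get $\|\eps^{m-1}A_\eps-A_0\|\le c(\eps^{1/2}+\eps^{m-1})$, invoke the same spectral perturbation lemma from \cite[III.1]{OleinikBook}, and then unwind. The mechanism you describe for how $\min\{m-1,\tfrac12\}$ turns into $\alpha(m)=\min\{2(m-1),m-\tfrac12\}$ via the extra factor $\eps^{m-1}$ is exactly right.

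However, the very bookkeeping you flag as the main obstacle is off in several places. Since $\lambda^\eps A_\eps u^\eps=u^\eps$, the eigenvalues of $B_\eps=\eps^{m-1}A_\eps$ are $\eps^{m-1}(\lambda_j^\eps)^{-1}$, not $\eps^{1-m}(\lambda_j^\eps)^{-1}$; the operator bound therefore gives $\bigl|\eps^{m-1}(\lambda_j^\eps)^{-1}-\lambda_j^{-1}\bigr|\le c_j(\eps^{1/2}+\eps^{m-1})$. Your displayed identity is then false as written (the right-hand side equals $\eps^{m-1}\lambda_j^\eps-\lambda_j$, not $\lambda_j^\eps-\eps^{m-1}\lambda_j$); the correct identity is
\[
\lambda_j^\eps-\eps^{m-1}\lambda_j=\lambda_j\,\lambda_j^\eps\bigl(\lambda_j^{-1}-\eps^{m-1}(\lambda_j^\eps)^{-1}\bigr),
\]
and the factor that produces the additional $\eps^{m-1}$ is $|\lambda_j^\eps|\le 2|\lambda_j|\eps^{m-1}$ (it is $\eps^{1-m}\lambda_j^\eps$, not $\eps^{m-1}\lambda_j^\eps$, that converges to $\lambda_j$). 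With these corrections your argument is identical to the paper's.
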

\begin{proof}
As in the proof of Theorem~\ref{TheoremMequals1}, we introduce the sesquilinear form
\begin{equation*}
  a_0(\phi,\psi)=\int_\gamma \kappa \phi\overline{\psi}\,d\ell\qquad \mbox{for all } \phi, \psi \in\cH,
\end{equation*}
and the corresponding self-adjoint operator $A_0 \, :\, \cH \to \cH, $ defined by $A_0\phi=u$ where $u$ is the solution of
$$\la u, \psi \ra =a_0(\phi, \psi) \quad \mbox{for all }\psi\in \cH.$$
In this way, we also have  $\la A_0\phi,\psi \ra=a_0(\phi,\psi)$.

Repeated application of Propositions \ref{PropositionL2small} and \ref{PropositionDeltaOnGamma}  enables us to write
\begin{multline*}
    |\eps^{m-1}a_\eps (\phi,\phi)-a_0(\phi,\phi)|\le
\left|\eps^{-1}\int_{\omega^\eps}q^\eps |\phi|^2\,dS-\int_\gamma \kappa |\phi|^2\,d\ell\right|\\
    +\eps^{m-1}\int_{\Omega\setminus\omega^\eps}\rho |\phi|^2\,dS\leq c(\eps^{1/2}+\eps^{m-1})\|\phi\|^2
\end{multline*}
for all $\phi\in\cH$. So, we see that $\|\eps^{m-1}A_\eps-A\|\le c_1(\eps^{1/2}+\eps^{m-1})$, and therefore
\begin{equation*}
   \left|\frac{\eps^{m-1}}{\lambda^\eps_j}-\frac1{\lambda_j}\right|\le c_j (\eps^{1/2}+\eps^{m-1}).
\end{equation*}
It follows from this estimate that $\eps^{1-m}\lambda^\eps_j$ converge to $\lambda_j$, and
\begin{multline*}
  |\lambda^\eps_j-\eps^{m-1}\lambda_j|\le c_j|\lambda_j||\lambda^\eps_j|(\eps^{1/2}+\eps^{m-1})\\
  \le 2 c_j|\lambda_j|^2\eps^{m-1}(\eps^{1/2}+\eps^{m-1}) \le C_j(\eps^{m-1/2}+\eps^{2(m-1)}),
\end{multline*}
which completes the proof. 
\end{proof}

\section{Asymptotics of upper part of  $\sigma(\cA_\eps)$ in the case $m\in(1,2)$}\label{SecM12}

In the previous section, we described the behavior, as $\eps$ tends to zero, of eigenvalues $\lambda^\eps_j$  for any fixed $j$. The $\{\lambda_j^\eps\}_{j=1}^\infty$ have been ordered in an increasing order  and  the convergence of $\lambda^\eps_j$ to zero is not uniform with respect to the number. Indeed, under the basis of $c_j$ independent of $\eps$ in Theorem \ref{TheoremLowFrequencies}, the constants $C_j$ in inequalities \eqref{convvp} tend to infinity as $j\to \infty$, since $C_j\ge O(|\lambda_j|^2)$. Therefore, even if  $\eps$ is sufficiently small, only a finite number of eigenvalues have the asymptotics given by \eqref{convvp}. For all the other eigenvalues, the value of $C_j\eps^{\beta(m)}$ is the same or larger than $\eps^{m-1}\lambda_j$, and the asymptotic expansion $\lambda^\eps_j=\eps^{m-1}\lambda_j+O(\eps^{\beta(m)})$ is not valid (see Figure~\ref{FigPertubations}\,(a)). This raises the question of the asymptotic behavior of large eigenvalues. We have shown that the spectra of both operators $\cA_\eps$ and $\cA$ can intersect with the spectrum of $\cD$. In this section, we discuss the role of $\cD$ in approximating the upper part of $\sigma(\cA_\eps)$. Under the basis of the normalization of the eigenfunctions in $\cH$,  we show that there are sequences $\{\lambda^\eps\}_{\eps>0}$ of eigenvalues such that $\lambda^\eps\to \mu$ and $\mu>0$, and the corresponding eigenfunctions $u^\eps$ converge towards a  non-zero function $u$ in $\cH$ weakly only if $\mu$ is an eigenvalue of $\cD$ (cf. Theorems \ref{ThM12} and \ref{ThM12bis}). If so, $u$ is the corresponding eigenfunction.

It is assumed that the potential $V$ is positive and $m\in(1,2)$. The quadratic form  $a_\eps(\phi,\phi)$ is continuous with respect to $\eps$, for $\eps\neq 0$.
Therefore, using results on comparison of eigenvalues  and the variational principles (cf. e.g. \cite[I.7]{SHSP-book}), it can be shown  that the eigenvalues $\lambda^\eps_j$  are continuous functions of $\eps\in (0,1]$.
The continuity at zero is a consequence of Theorem \ref{TheoremLowFrequencies}. Let
\begin{equation*}
 \Sigma= \big\{(\eps,\lambda)\colon  \eps\in (0,1), \; \lambda\in \sigma(\cA_\eps)\big\}.
\end{equation*}
This set is the union of all curves in $\Real^2_{\eps,\lambda}$ parameterized by the eigenvalues $\lambda=\lambda^\eps_j$, $\eps\in(0,1)$.
Let $\clo \Sigma$ denote the set of all points $\lambda^*$ such that $(0,\lambda^*)$ belongs to the closure of $\Sigma$, namely, $\lambda^*$ is a limit point of $\lambda^\varepsilon$ as $\varepsilon\to 0.$

\begin{figure}[ht]
  \centering
  \includegraphics[scale=0.45]{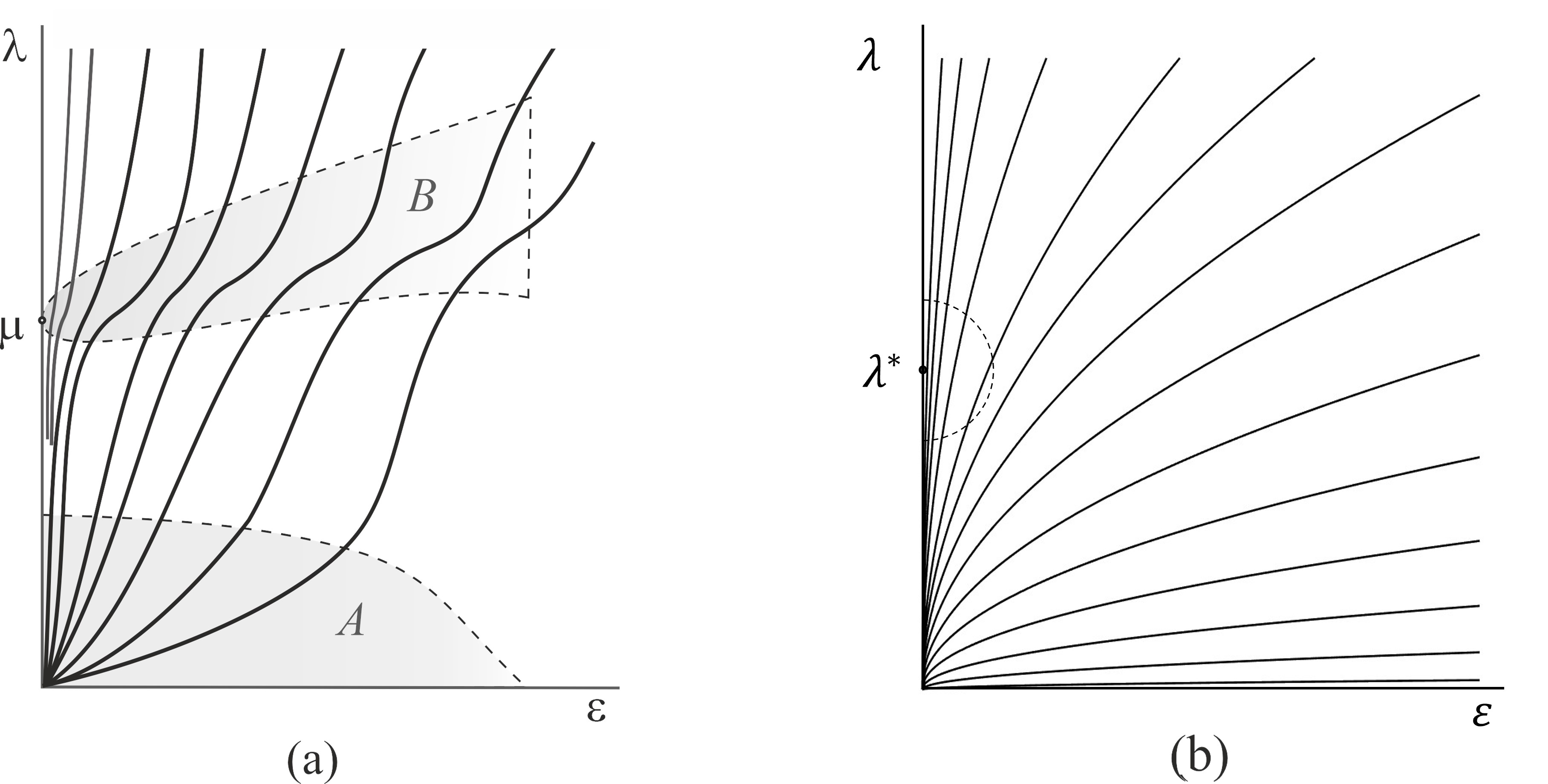}\\
  \caption{(a) The set $\Sigma$: domain $A$ is where  asymptotics \eqref{convvp} holds, while domain $B$ is where the asymptotics is not valid, but other approaches of eigenvalues could exist. (b) An illustration of possible values $\lambda_{j}^{\eps}$ for $m=3/2.$}\label{FigPertubations}
\end{figure}

As a consequence of Theorem 1 in \cite{CastroZuazua}, we claim:

\begin{lemma}\label{LemmaCl0}
  $\clo \Sigma=[0,+\infty)$.
\end{lemma}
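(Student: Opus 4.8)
The goal is to show that the set of limit points of eigenvalues $\lambda^\eps$ of $\cA_\eps$, as $\eps\to0$, covers the whole half-line $[0,+\infty)$. The inclusion $\clo\Sigma\subset[0,+\infty)$ is immediate since each $\cA_\eps$ is bounded from below uniformly (indeed, by the quadratic form computations in Section~\ref{SecM1}, $(\cA_\eps\phi,\phi)_{\rho^\eps}\ge -c\,\rho^\eps_{\min}{}^{-1}\|\phi\|^2_{\rho^\eps}$, and with $V>0$ one gets nonnegativity), so all eigenvalues are $\ge 0$ and the limit points lie in $[0,+\infty)$. The substance is the reverse inclusion: every $\mu\ge0$ is a limit point.

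\textbf{Reduction and strategy.} The plan is to invoke Theorem~1 of \cite{CastroZuazua}, which concerns precisely a second-order operator with a density that degenerates to a measure concentrated on a lower-dimensional set and describes the accumulation of the high-frequency spectrum on the whole positive axis. First I would restrict attention to a single surface $\Omega_k$ together with the band $\omega^\eps_k$, where in the local coordinates $(y_k,x_3)$ (or the stretched $(t_k,s)$) the Laplace--Beltrami operator is just $\partial^2_{y_k}+\partial^2_{x_3}$ and the weight $\rho^\eps$ is $\eps^{-m}q^\eps_k$ on a strip of width $\eps$; this is exactly the model geometry of \cite{CastroZuazua} after rescaling. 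One then produces, for each target value $\mu>0$, a quasimode: a function supported in a neighborhood of $\gamma$ inside one $\omega^\eps_k$, extended by zero to the rest of $\Omega$ and to the other surfaces, which oscillates on the $O(\eps)$ scale in the $y_k$-direction so that $\eps^{-2}\partial^2_{t}$ balances $\lambda^\eps\eps^{-m}q$ with $\lambda^\eps\to\mu$; this uses precisely the $\eps^\beta$-scaling heuristics with $\beta<m-1$ mentioned before Section~\ref{SecM12}. Since such a quasimode can be taken continuous on $\Omega^*$ (it vanishes near $\gamma$, or matches trivially) and in $\dmn\cA_\eps$ after minor smoothing, the spectral theorem gives an eigenvalue of $\cA_\eps$ within $o(1)$ of $\mu$. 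The case $\mu=0$ follows from Theorem~\ref{TheoremLowFrequencies}, which already exhibits eigenvalues $\lambda^\eps_j\sim\eps^{m-1}\lambda_j\to0$.

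\textbf{Main obstacle.} The delicate point is making the citation to \cite{CastroZuazua} rigorous in our setting: that reference treats a concrete one- or two-dimensional configuration, whereas here the band $\omega^\eps_k$ is a curved strip attached along $\gamma$ to a Riemannian surface, and one must check that the relevant hypotheses (smoothness and positivity of $q$, the precise scaling of the width versus the density exponent, the boundary conditions at the far edge $\Gamma_k$ and the Kirchhoff condition at $\gamma$) are genuinely satisfied, so that the high-frequency accumulation result transfers. Concretely, the hard part is verifying that the quasimodes built on the flat rescaled strip remain valid quasimodes for $\cA_\eps$ after accounting for the (bounded, $\eps$-independent in the stretched variables) perturbation of the coefficients coming from $V(\eps t,s)$ and from gluing to $\Omega_k$, i.e.\ that the error in the eigenvalue equation is $o(1)$ in the $L^2(\rho^\eps)$-norm relative to the quasimode. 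Once this is in place, a standard spectral-approximation lemma (if $\|\cA_\eps\psi-\mu\psi\|\le\delta\|\psi\|$ then $\sigma(\cA_\eps)\cap[\mu-\delta,\mu+\delta]\ne\varnothing$) closes the argument, and letting $\eps\to0$ shows $\mu\in\clo\Sigma$. Since $\mu\ge0$ was arbitrary, $\clo\Sigma=[0,+\infty)$.
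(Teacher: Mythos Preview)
Your top-level plan—invoke Theorem~1 of \cite{CastroZuazua} for the nontrivial inclusion—coincides with the paper's, which simply states the lemma as a consequence of that reference. The gap is in the mechanism you sketch to justify the citation.

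The quasimode you describe does not hit a prescribed finite $\mu$. A function supported in one band $\omega^\eps_k$ and oscillating at scale $O(\eps)$ in $y_k$ has, in the stretched variable $t=y_k/\eps$, $\partial_t^2 u=O(1)$; balancing $\eps^{-2}\partial_t^2$ against $\lambda^\eps\eps^{-m}q$ forces $\lambda^\eps\sim\eps^{m-2}$, which for $m\in(1,2)$ diverges to $+\infty$ rather than tending to $\mu$. Adjusting the oscillation scale to $\eps^{m/2}$ so that $\lambda^\eps=O(1)$ does not help either, because $\eps^{m/2}\gg\eps$ and the band is too narrow to support even one period; a function vanishing near $\gamma$ and supported outside $\omega^\eps$ becomes instead a quasimode for the Dirichlet operator $\cD$, whose spectrum is discrete and does not cover all of $(0,\infty)$. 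In short, there is no single-scale quasimode construction that manufactures an eigenvalue near an \emph{arbitrary} $\mu>0$.

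The argument actually underlying \cite{CastroZuazua}, and spelled out in the paper's Remark~\ref{RemCl0}, is a continuity/intermediate-value argument rather than a quasimode one: each branch $\eps\mapsto\lambda^\eps_j$ is continuous in $\eps$, tends to $0$ as $\eps\to0$ (Theorem~\ref{TheoremLowFrequencies}), and for every fixed $\eps_0>0$ the sequence $\lambda^{\eps_0}_j$ runs to $+\infty$. Hence for any $\mu>0$ and any small $\eps_0$ there is a $j$ with $\lambda^{\eps_0}_j>\mu$, and by the intermediate value theorem that branch crosses $\mu$ at some $\eps\in(0,\eps_0)$. Shrinking $\eps_0$ produces points of $\Sigma$ at height $\mu$ with $\eps\to0$, so $\mu\in\clo\Sigma$. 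This bypasses the quasimode issue entirely and is the route you should take to make the citation rigorous.
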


 Note that Lemma \ref{LemmaCl0} implies that for each $\lambda^*>0$ there are sequences $\lambda^{\eps_n}_{i(\eps_n)}\to \lambda^*$ as $\eps_n\to 0$ where, on account of Theorem 5, $i(\eps_n)\to +\infty$. It is worth mentioning that the existence of $i(\eps)\to +\infty$ such that the whole sequence $\lambda^{\eps}_{i(\eps )}\to \lambda^*$ could be obtained by means of the corresponding spectral families (cf. \cite{LoPe-1997} for the technique  and \cite{GoGoLoPe-2} for further explanations and references). For the sake of completeness, Remark \ref{RemCl0} contains a formal proof based on a graphic  for the specific order $\lambda_j^\eps=O(\eps^{m-1})$ with $m\in (1,2)$.

\begin{remark}\label{RemCl0}\rm
All eigenvalues $\lambda^\eps_j$ are positive, therefore $\clo \Sigma\subset [0,+\infty)$.
Additionally, $\lambda=0$ belongs to $\clo \Sigma$ according to Theorem~\ref{TheoremLowFrequencies}.
As above mentioned, given $j\in \Ntr$,  the eigenvalue $\lambda^\eps_j$ is a continuous function of $\eps$ that goes to zero as $\eps\to 0$, $\lambda^\eps_j\approx \eps^{m-1} \lambda_j$.  Also $\lambda_j\to +\infty$ and,  for fixed $\eps=\eps_0$,   $\lambda^{\eps_0}_j\to +\infty$  as $j\to + \infty$.
If, contrary to our claim, some point $\lambda^*>0$ is not included in  $\clo \Sigma$,  then,  for all sequences $\eps_n\to 0$,  none subsequence, still denoted by $\eps_n,$  of eigenvalues $\lambda^{\eps_n}_{i(\eps_n)}$can converge towards $\lambda^*$ and  there will likely exists  a neighborhood of $(0,\lambda^*)$, $B_{\lambda^*},$  that is free of points of $\Sigma$.
Based on what is shown in Figure~\ref{FigPertubations}\,(b) we conclude that for sufficiently large $j$ we can find $\eps_j$ sufficiently small such that $\lambda_{j}^{\eps_j}\in B_{\lambda^*}$. This contradicts the assumption.
\end{remark}

The lemma states that any positive number can be approximated by a sequence of eigenvalues of $\cA_\eps$. However, there is a difference  between the spectrum of $\cD$ and the other points in $\clo \Sigma$. This distinction can only be explained by the behavior of the corresponding eigenfunctions.

Let $\cE$ be a subset of the interval $(0,1)$ for which zero is a limit point. We also introduce the space
\begin{equation*}
   \cH_0=\{\phi\in\cH\colon \phi=0 \text{ on }\gamma\}.
\end{equation*}

\begin{theorem}\label{ThM12}
  Assume $m\in(1,2)$. Let $\{\lambda^\eps\}_{\eps\in\cE}$ be a sequence of eigenvalues of $\cA_\eps$ and $\{u^\eps\}_{\eps\in\cE}$ be a sequence of the corresponding eigenfunctions, normalized by $\|u^{\eps}\|=1$. Suppose that $\lambda^\eps$ converge to some positive value $\lambda$ as $\cE\ni\eps\to 0$ and $u^\eps\to u$ in $\cH$ weakly.
 \begin{enumerate}
     \item[\rm{(i)}] If $\lambda\notin \sigma(\cD)$, then $u=0.$
     \item[\rm{(ii)}]   If the limit function $u$ is not equal to zero, then $\lambda$ is an eigenvalue of $\cD$  and $u$ is an  eigenfunction associated with $\lambda$.
  \end{enumerate}
\end{theorem}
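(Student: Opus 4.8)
The plan is to pass to the limit in the weak (variational) formulation of the eigenvalue problem for $\cA_\eps$ and to use the scaling properties of the mass term on the thin set $\omega^\eps$. Since $u^\eps$ is an eigenfunction of $\cA_\eps$ with eigenvalue $\lambda^\eps$, it satisfies $\la u^\eps,\psi\ra=\lambda^\eps a_\eps(u^\eps,\psi)$ for all $\psi\in\cH$; equivalently, in operator form, $u^\eps=\lambda^\eps A_\eps u^\eps$. First I would record the a priori bounds: because $\|u^\eps\|=1$ and $\|A_\eps\|=O(\eps^{1-m})$, the eigenvalue relation forces $\lambda^\eps\ge c\eps^{m-1}$, which is consistent with $\lambda^\eps\to\lambda>0$; and by Proposition~\ref{PropositionL2small} together with the scaling $\rho^\eps=\eps^{-m}q^\eps$ on $\omega^\eps$, one has $a_\eps(u^\eps,u^\eps)=\lambda^{-\eps}\to\lambda^{-1}$, while $\eps^{-m}\int_{\omega^\eps}q^\eps|u^\eps|^2\,dS=a_\eps(u^\eps,u^\eps)-\int_{\Omega\setminus\omega^\eps}\rho|u^\eps|^2\,dS$ stays bounded. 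Hence $\int_{\omega^\eps}q^\eps|u^\eps|^2\,dS=O(\eps^{m})$, and since $m>1$ this already shows $\int_{\omega^\eps}|u^\eps|^2\,dS=O(\eps^{m})=o(\eps)$, a decay stronger than the generic $O(\eps)$ of Proposition~\ref{PropositionL2small}.

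Next I would identify the limiting constraint. Fix $\psi\in\cH$ and test: $\int_{\omega^\eps}q^\eps u^\eps\overline{\psi}\,dS$ is $O(\eps^{m/2})\cdot O(\eps^{1/2})=o(1)$ after dividing by $\eps^m$ only if we are careful — more precisely, $\eps^{-m}\int_{\omega^\eps}q^\eps u^\eps\overline\psi\,dS$ is controlled by $\eps^{-m}\|u^\eps\|_{L^2(\omega^\eps)}\|\psi\|_{L^2(\omega^\eps)}\le \eps^{-m}\cdot O(\eps^{m/2})\cdot O(\eps^{1/2})=O(\eps^{1/2-m/2})$, which blows up; so instead I would use the standard trick of replacing $\psi$ on $\omega^\eps$ by its trace $\psi|_\gamma$ and controlling the difference by the gradient, exactly as in the proof of Proposition~\ref{PropositionDeltaOnGamma}. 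The clean way is: for $\psi\in\cH_0$ (so $\psi=0$ on $\gamma$) one gets $\|\psi\|_{L^2(\omega^\eps)}=O(\eps)$, hence $\eps^{-m}\int_{\omega^\eps}q^\eps u^\eps\overline\psi\,dS=O(\eps^{-m}\cdot\eps^{m/2}\cdot\eps)=O(\eps^{1-m/2})=o(1)$ since $m<2$. Therefore, taking $\psi\in\cH_0$ in the weak formulation and letting $\eps\to0$ (using weak convergence $u^\eps\rightharpoonup u$ in $\cH$, which passes to the limit in the bilinear form $\la\cdot,\psi\ra$, and $\int_{\Omega\setminus\omega^\eps}\rho u^\eps\overline\psi\,dS\to\int_\Omega\rho u\overline\psi\,dS$), we obtain
\begin{equation*}
  \la u,\psi\ra=\lambda\int_\Omega\rho\,u\overline\psi\,dS\qquad\text{for all }\psi\in\cH_0.
\end{equation*}
This says precisely that $u$ is a weak solution of $-\Delta_\Omega u+Vu=\lambda\rho u$ in $\Omega$, $\partial_n u=0$ on $\Gamma$, with the Dirichlet-type side condition still to be pinned down on $\gamma$.

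To finish I would pin down the trace of $u$ on $\gamma$. The point is that $u|_\gamma=0$, which combined with the equation above makes $u$ an eigenfunction of $\cD$ (note $h=\rho$ is the relevant weight here, since off $\omega^\eps$ the density is exactly $\rho$). To see $u|_\gamma=0$: from $\int_{\omega^\eps}q^\eps|u^\eps|^2\,dS=O(\eps^m)$ and the argument of Proposition~\ref{PropositionDeltaOnGamma} — which compares $\eps^{-1}\int_{\omega^\eps}q^\eps|u^\eps|^2\,dS$ with $\int_\gamma\kappa|u^\eps|^2\,d\ell$ up to $O(\eps^{1/2})\|u^\eps\|^2=O(\eps^{1/2})$ — one gets $\int_\gamma\kappa|u^\eps|^2\,d\ell=O(\eps^{m-1})+O(\eps^{1/2})\to0$. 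Since the trace map $\cH\to L^2(\gamma)$ is compact, weak convergence $u^\eps\rightharpoonup u$ in $\cH$ upgrades to strong convergence of traces $u^\eps|_\gamma\to u|_\gamma$ in $L^2(\gamma)$, and hence $\int_\gamma\kappa|u|^2\,d\ell=0$, i.e. $u|_\gamma=0$, so $u\in\cH_0$. Thus $u\in\dmn\cD$ and $\cD u=\lambda u$. This gives (ii): if $u\neq0$ then $\lambda\in\sigma(\cD)$ and $u$ is a corresponding eigenfunction. Statement (i) is the contrapositive: if $\lambda\notin\sigma(\cD)$, then the only $u$ satisfying $\cD u=\lambda u$ is $u=0$.

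The main obstacle I anticipate is the handling of the concentrated-mass term $\eps^{-m}\int_{\omega^\eps}q^\eps u^\eps\overline\psi\,dS$ for \emph{general} test functions $\psi\in\cH$ (not just $\psi\in\cH_0$): naively it does not vanish, and one might worry it contributes a $\delta_\gamma$-type term in the limit as in the $m=1$ case. The resolution — and the key quantitative input — is the improved decay $\|u^\eps\|_{L^2(\omega^\eps)}^2=O(\eps^m)$ derived from the eigenvalue normalization, which is genuinely better than Proposition~\ref{PropositionL2small} and crucially uses $\lambda^\eps=O(1)$ rather than $O(\eps^{m-1})$; this is what makes the high-frequency regime behave like the fixed-$\gamma$ problem \eqref{PrbB00N} / $\cD$ rather than like the low-frequency limit. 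Restricting to $\psi\in\cH_0$ in the weak formulation is legitimate precisely because, having shown $u\in\cH_0$ and $\cD u=\lambda u$, the side condition $\partial_n u=0$ and the interior equation are already encoded; no further information from $\psi\notin\cH_0$ is needed for the stated conclusions.
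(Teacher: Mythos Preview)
Your proposal is correct and follows essentially the same route as the paper: use the normalization $\|u^\eps\|=1$ together with $\lambda^\eps\to\lambda>0$ to obtain the improved bound $\int_{\omega^\eps}|u^\eps|^2\,dS=O(\eps^m)$, deduce $\|u^\eps\|_{L_2(\gamma)}\to 0$ via the comparison argument of Proposition~\ref{PropositionDeltaOnGamma}, test the weak formulation against $\psi\in\cH_0$ so that the concentrated-mass term vanishes in the limit (here $m<2$ is used), and conclude that $u$ satisfies the variational identity for $\cD$ with $u|_\gamma=0$. The only differences are organizational: you make explicit the Poincar\'e bound $\|\psi\|_{L^2(\omega^\eps)}=O(\eps)$ for $\psi\in\cH_0$ and the compactness of the trace map $\cH\to L^2(\gamma)$ to pass from $u^\eps|_\gamma\to 0$ to $u|_\gamma=0$, steps the paper leaves implicit.
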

\begin{proof}
First, we prove that $u^{\eps}|_{\gamma}\to 0$ in $L_2(\gamma)$, as $\cE\ni\eps\to 0$.
  An eigenfunction $u^\eps$ of problem  \eqref{PertPrblEq}-\eqref{PertPrblKirh} satisfies the identity
 \begin{equation}\label{UepsIdentity}
   \int_{\Omega}\left(\nabla u^\eps\cdot\nabla\overline\phi +Vu^\eps \overline\phi \right)\,dS=\lambda^\eps\int_{\Omega}\rho^\eps u^\eps \overline\phi\,dS \quad \text{for all }\phi\in \cH.
 \end{equation}
When considering the normalized eigenfunction, this identity gives
 \begin{equation*}
   \int_{\Omega\setminus\omega^{\eps}}\rho |u^{\eps}|^2\,dS+{\eps}^{-m}\int_{\omega^{\eps}}q^{\eps} |u^{\eps}|^2\,dS \, =\frac{1}{\lambda^\eps}.
 \end{equation*}
From this, we immediately estimate that
 \begin{equation*}
\eps^{-1}\int_{\omega^{\eps}} |u^{\eps}|^2\,dS\le c_1 \eps^{m-1}.
 \end{equation*}
Next, by applying \eqref{NLforU2} for $u^{\eps}$ instead of $\psi$ and repeating the same computation as in the proof of Proposition~\ref{PropositionDeltaOnGamma}, we obtain
\begin{equation}\label{norm}
  \|u^\eps\|_{L_2(\gamma)}^2=\int_\gamma|u^{\eps}|^2\,d\ell=\eps^{-1}\int_{\omega^\eps}|u^{\eps}(0,\cdot)|^2\,dS\le c_2(\eps^{m-1}+\eps^{1/2}).
\end{equation}
Hence, $u^{\eps}|_{\gamma}$ converge to zero in $L_2(\gamma)$, and moreover
\begin{equation}\label{conver}
  \eps^{-m}\int_{\omega^{\eps}}q^{\eps} u^\eps \overline\phi\,dS\to 0,\qquad\cE\ni\eps\to 0,
\end{equation}
provided $\phi\in \cH_0$ and $m\in(1,2)$. By passing to the weak limit in \eqref{UepsIdentity}, we obtain that
\begin{equation}\label{UIdentity}
   \int_{\Omega}\left(\nabla u\cdot\nabla\overline\phi +Vu \overline\phi \right)\,dS=\lambda \int_{\Omega}\rho u \overline\phi\,dS \qquad\text{for all }\phi\in \cH_0.
\end{equation}
Therefore, $u$ is either an eigenfunction of $\cD$ with the eigenvalue $\lambda$ or zero, since $u|_{\gamma}=0$. 
\end{proof}

The theorem we have just proved does not guarantee the existence of convergent sequences $\lambda^\eps\to \lambda$ and $u^\eps\to u$ such that $u$ is not zero if $\lambda$ belongs to $\sigma(\cD)$. This fact will be demonstrated constructing the so-called quasimodes.
We refer to \cite{VishikLyusternik} for the  proof of Lemma~\ref{Lazutkinlema}.

\begin{lemma}\label{Lazutkinlema}
Let $L : H \longrightarrow H$ be a linear, self-adjoint, positive and compact
operator on a separable Hilbert space $H$ with domain $D(H)$. Let
$v \in D(H)$, with $\|v\|_H=1$ and $\mu, \; r>0$ such that
$\|Lv-\mu v\|_H \leq r$. Then, there exists an eigenvalue
$\mu^*$ of $L$ satisfying $|\mu - \mu^*|\leq r$.
Moreover, for any $d>r$,
 there is $v^*\in H$, with $\|v^*\|_H=1$, $v^*$ belonging to the eigenspace
associated with the eigenvalues of the operator $L$ lying on
the interval $[\mu -d,\mu +d]$ and such that
$$\|v-v^*\|_H \leq 2rd^{-1}.$$
\end{lemma}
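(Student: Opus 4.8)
The plan is to prove this purely spectral-theoretic statement using the spectral decomposition of the compact self-adjoint operator $L$. First I would invoke the spectral theorem: since $L$ is compact, self-adjoint and positive, there is an orthonormal basis $\{e_k\}_{k\ge 1}$ of $\overline{\Im L}$ consisting of eigenvectors, $Le_k=\mu_k e_k$ with $\mu_k\ge 0$, $\mu_k\to 0$, together with $\ker L$. Expand $v=\sum_k c_k e_k + v_0$ with $v_0\in\ker L$, so that $\|v\|_H^2=\sum_k|c_k|^2+\|v_0\|^2=1$, and $Lv-\mu v = -\mu v_0 + \sum_k(\mu_k-\mu)c_k e_k$. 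Hence
\begin{equation*}
 r^2\ge\|Lv-\mu v\|_H^2=\mu^2\|v_0\|^2+\sum_k(\mu_k-\mu)^2|c_k|^2 \ge \Big(\min_k|\mu_k-\mu|^2\Big)\sum_k|c_k|^2 + \mu^2\|v_0\|^2 .
\end{equation*}
If $\mu$ were at distance $>r$ from every eigenvalue (here $0$ counts as an eigenvalue when $\ker L\neq\{0\}$), then the right-hand side would be strictly larger than $r^2\big(\sum_k|c_k|^2+\|v_0\|^2\big)=r^2$, a contradiction; this gives the existence of an eigenvalue $\mu^*$ with $|\mu-\mu^*|\le r$.

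For the second assertion, fix $d>r$ and split the index set into $S=\{k:|\mu_k-\mu|\le d\}$ and its complement. Set $v^*$ to be the normalized projection of $v$ onto the span of $\{e_k:k\in S\}$, i.e.\ $v^*=\|P_S v\|_H^{-1}P_S v$ where $P_S$ is the orthogonal projection; this is legitimate provided $P_Sv\neq 0$, which follows since otherwise $\|Lv-\mu v\|_H^2\ge d^2\sum_{k\notin S}|c_k|^2+\mu^2\|v_0\|^2 > r^2$ (using $d>r$ and, if $v_0\neq0$, $\mu\ge d$ forced on the complement), again a contradiction. By construction $v^*$ lies in the eigenspace associated with eigenvalues in $[\mu-d,\mu+d]$ and $\|v^*\|_H=1$. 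To estimate $\|v-v^*\|_H$, write $v-v^*=(v-P_Sv)+(P_Sv-v^*)$; the first term is the part of $v$ supported on the complement (together with $v_0$ if $\mu\ge d$), whose norm squared is at most $d^{-2}\|Lv-\mu v\|_H^2\le r^2/d^2$, and the second term has norm $\big|1-\|P_Sv\|_H\big|\le 1-\|P_Sv\|_H^2=\|v-P_Sv\|_H^2\le r^2/d^2$. Combining, $\|v-v^*\|_H\le r/d+r^2/d^2\le 2r/d$ for $r\le d$, which is the claimed bound (and one may absorb the mild loss into the constant, or argue slightly more carefully using $\|P_Sv-v^*\|_H\le\|v-P_Sv\|_H$ directly since $\|P_Sv\|_H\le 1$).

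The only real subtlety — and the step to carry out with care — is the bookkeeping around $\ker L$ and the eigenvalue $0$: since $\mu>0$ is assumed, the contribution $\mu^2\|v_0\|^2$ always helps rather than hurts, so no genuine obstacle arises there; one just has to state the spectral decomposition so that it covers the kernel. Everything else is an elementary application of Parseval's identity together with the observation that functional calculus gives $\|Lv-\mu v\|_H^2=\|(L-\mu)v\|_H^2$ as a sum over the spectral resolution. Since the paper cites \cite{VishikLyusternik} for the proof, I would only sketch this argument or cite it directly, noting that it is a standard quasimode lemma.
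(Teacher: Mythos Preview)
Your argument is correct and is the standard spectral-theoretic proof of this quasimode lemma. The paper does not give its own proof of this statement at all: it simply writes ``We refer to \cite{VishikLyusternik} for the proof of Lemma~\ref{Lazutkinlema},'' so there is nothing to compare against beyond noting that what you sketched is precisely the classical argument one finds in the cited source (and in \cite{Lazutkin}, which the paper also invokes for the companion Lemma~\ref{PropQuasimodes}).

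One small comment on execution: your handling of $\ker L$ is slightly tangled in the second part. The cleanest route is to work directly with the full spectral projection $E=E([\mu-d,\mu+d])$, which automatically includes $\ker L$ whenever $\mu\le d$; then $\|(L-\mu)(I-E)v\|\ge d\|(I-E)v\|$ gives $\|v-Ev\|\le r/d$ in one line, and $\|Ev-v^*\|=1-\|Ev\|\le 1-\|Ev\|^2\le r^2/d^2\le r/d$ finishes the estimate. Your version with $P_S$ over the nonzero eigenvectors requires the case split you flagged (whether $0$ lies in the interval), and the parenthetical ``if $v_0\neq0$, $\mu\ge d$ forced on the complement'' is not quite right as written --- it is not forced, rather when $\mu\le d$ the kernel belongs to the target subspace and should be absorbed into the projection. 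You clearly see this; just phrase it via $E$ rather than $P_S$ and the subtlety disappears.
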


The couple $(\mu, v) \in \mathbb{R} \times H$ such that $\|L v-\mu v\|_H \leq r$ and $\|v\|_H=1$ is called a quasimode of the operator $L$ with error $r$.
If $r=0$, then $\mu$ is an eigenvalue of $L$ with the normalized eigenfunction $v$. Otherwise, as stated Lemma \ref{Lazutkinlema}, given a quasimode with error $r,$ the interval $[\mu-r, \mu+r]$ contains at least one eigenvalue $\mu^*$ of $L$.

It should be noted that no assertion can be made about the relative closeness of the quasimode $v$ to a true eigenvector $v^*$. The only fact that can be stated is that
\begin{equation}\label{EVminusV}
  \|E(\Delta)v-v\|_H\le rd^{-1},
\end{equation}
where $\Delta=[\mu-d, \mu+d]$ and $E(\Delta)$ is the spectral projection of $L$ corresponding to $\Delta$. If $\Delta$ contains only one simple eigenvalue $\mu^*$ of $L$, then  there exists a normalized eigenvector $v^*$ such that
\begin{equation}\label{VminusV*}
  \|v-v^*\|_H\le 2rd^{-1},
\end{equation}
since $E(\Delta)=(v,v^*)_H \:v^*$  (see \cite{VishikLyusternik} and \cite{Lazutkin}, for details).

A family of quasimodes $\{(\mu, v_1), \ldots,(\mu, v_J)\}$ with error $r$  is said to have a deviation from orthogonality $\theta$ if $\left|\left(v_i, v_j\right)_H-\delta_{ij}\right| \leq \theta$
for all $i,j=1, \ldots, J$, where $\delta_{ij}$ is the Kronecker delta.
We refer to \cite{Lazutkin} for the  proof of Lemma~\ref{PropQuasimodes}.

\begin{lemma}\label{PropQuasimodes}
Let $\{(\mu, v_1), \ldots,(\mu, v_J)\}$ be a family of quasimodes of the operator $L$ with error $r$ and deviation from orthogonality $\theta$. If $rd^{-1}+\theta<J^{-1}$, then  $L$ has eigenvalues on the interval $[\mu-d, \mu+d]$ with a total multiplicity of $J$.
\end{lemma}

Let us construct quasimodes for the operator $A_\eps\colon\cH\to\cH$ introduced in Section~\ref{SecM1}. We consider the pair $(\lambda^{-1}, u)$, where $\lambda$ is an eigenvalue of $\cD$ and $u$ is the corresponding normalized eigenfunction. We need to evaluate whether the norm $\|A_\eps u-\lambda^{-1} u\|$ is small as $\eps$ tends to zero.

It is observed that $u$ satisfies identity \eqref{UIdentity} for functions $\phi\in\cH_0$, but for the test functions from $\cH$ the following identity holds:
\begin{equation}\label{UIdentityOnH}
   \int_{\Omega}\left(\nabla u\cdot\nabla\overline\phi +Vu \overline\phi \right)\,dS+\int_{\gamma} \cK u \,\overline\phi\,d\ell=\lambda \int_{\Omega}\rho u \overline\phi\,dS \qquad\text{for all }\phi\in \cH.
\end{equation}
In addition, the eigenfunction $u$, as we noted above, belongs to $W_2^2(\Omega)$. Due to the Sobolev embedding  $W_2^2(\Omega_k)\hookrightarrow C^{0,\eta}(\Omega_k)$, valid for $\eta\in(0,1)$
(cf. \cite[1.27, 6.2]{Adams}), we have that
$$|u(x)|\le c |x|^\eta$$ in a vicinity of $\gamma$, because of $u|_\gamma=0$. Combining this with Proposition~\ref{PropositionL2small}, we have
\begin{equation}\label{EstUphi}
  \left|\int_{\omega^\eps}q^\eps u\overline\phi\,dS\right|\le c_1\max_{x\in\omega_\eps}|u(x)|\int_{\omega^\eps} |\phi|\,dS\le
  c_2\eps^{\eta+1}\|\phi\|.
\end{equation}
Applying \eqref{UIdentityOnH} and \eqref{EstUphi}, we deduce
\begin{multline*}
   \la A_\eps u-\lambda^{-1}u,\phi\ra
   =\int_{\Omega\setminus\omega^\eps}\rho u\overline{\phi}\,dS+\eps^{-m}\int_{\omega^\eps}q^\eps u\overline{\phi}\,dS
   \\
   -\lambda^{-1}\int_{\Omega}(\nabla u\cdot\nabla \overline{\phi}+Vu\overline{\phi})\,dS
   =\lambda^{-1}\int_{\gamma} \cK u \,\overline{\phi}\,d\ell +\eps^{-m}\int_{\omega^\eps}q^\eps u\overline{\phi}\,dS-\int_{\omega^\eps}\rho u\overline{\phi}\,dS
   \\
   =\lambda^{-1}\int_{\gamma} \cK u \,\overline{\phi}\,d\ell +O(\eps^{\eta+1-m}) \quad \text{as } \eps\to 0.
 \end{multline*}
Hence, the pair $(\lambda^{-1}, u)$ is not the best candidate for a quasimode, because the vector $A_\eps u-\lambda^{-1}u$ has a large norm in $\cH$. However, we will improve it now.

Let us assume that  $\kappa$ (namely, $q$) is sufficiently smooth and  there is  $g$ from $\cH$ such that \begin{equation}\label{gg}  g(y,s)=(\lambda\kappa(s))^{-1}\cK u(0,s) , \mbox{ in a neighborhood of $\gamma$ and $g\in \cH$.}\end{equation}
Then
\begin{equation}\label{LambdaKappa}
  \left|\eps^{-1}\int_{\omega^\eps}q^\eps g\overline{\phi}\,dS-\lambda^{-1}\int_{\gamma} \cK u \,\overline{\phi}\,d\ell\right|\le c\eps^{1/2}\|\phi\|,
\end{equation}
by Proposition~\ref{PropositionDeltaOnGamma}. We set $w_\eps=u-\eps^{m-1}g$ and consider a new pair $(\lambda^{-1}, w_\eps)$.
Repeating the previous argument and using \eqref{LambdaKappa} leads to the estimate
\begin{multline*}
   \left|\la A_\eps w_\eps-\lambda^{-1}w_\eps,\phi\ra\right|\le \left|\eps^{-1}\int_{\omega^\eps}q^\eps g\overline{\phi}\,dS-\lambda^{-1}\int_{\gamma} \cK u \,\overline{\phi}\,d\ell\right|
    \\
    +\eps^{-m}\left|\int_{\omega^\eps}q^\eps u\overline{\phi}\,dS\right|
    +\left|\int_{\omega^\eps}\rho u\overline{\phi}\,dS\right|+\eps^{m-1}\left|\int_{\Omega\setminus\omega^\eps}\rho g\overline{\phi}\,dS\right|
    \\
    +\lambda^{-1}\eps^{m-1}|\la g,\phi\ra|\le C(\eps^{m-1}+\eps^{1/2}+\eps^{\eta+1-m})\|\phi\|.
 \end{multline*}
Finally, we have
\begin{equation*}
 \left|\la A_\eps w_\eps-\lambda^{-1}w_\eps,\phi\ra\right|\le C \eps^{ \beta(m,\eta)}\|\phi\|,
\end{equation*}
where
\begin{equation}\label{BetaNM}
  \beta(m,\eta)=
  \begin{cases}
    m-1 & \text{if } m\in(1,1+\frac{\eta}2],\\
    \eta-m+1 & \text{if } m\in(1+\frac{\eta}2,\eta+1).
  \end{cases}
\end{equation}
We can see that for any $m$ as close to $2$ as possible, there exists $\eta\in(0,1)$ such that $ \beta(m,\eta)$ is positive. Hence, $\|A_\eps w_\eps-\lambda^{-1}w_\eps\|\le C \eps^{ \beta(m,\eta)}$, and therefore $(\lambda^{-1}, w_\eps)$ is a quasimode of $A_\eps$ with error of order $O(\eps^{ \beta(m,\eta)})$, as $\eps\to 0$.

Let $\lambda$ be an eigenvalue of $\cD$ with multiplicity $J$. In the corresponding eigen\-space~$U_\lambda$, we can choose a basis
$\{u^{(1)},\dots,u^{(J)},\}$ such that
\begin{equation*}
  \lambda\int_\Omega\rho u^{(i)}\overline{u^{(j)}}\,dS=\delta_{ij}\qquad \text{for } i,j=1,\dots,J.
\end{equation*}
Then this basis is orthonormal in the space $\cH$, i.e., $\la u^{(i)}, u^{(j)}\ra=\delta_{ij}$. We can construct the family of quasimodes
\begin{equation*}
  w^{(1)}_{\eps}=u^{(1)}-\eps^{m-1}g^{(1)},\dots,w^{(J)}_{\eps}=u^{(J)}-\eps^{m-1}g^{(J)}
\end{equation*}
with error of order $O(\eps^{ \beta(m,\eta)})$. In addition, this family has a deviation from orthogonality of the order $O(\eps^{m-1})$. Indeed, for every $i, j=1,\dots,J$ we have
\begin{multline*}
  \la w^{(i)}_{\eps}, w^{(j)}_{\eps}\ra-\delta_{ij}=\la u^{(i)}-\eps^{m-1}g^{(i)}, u^{(j)}-\eps^{m-1}g^{(j)}\ra
  -\la u^{(i)}, u^{(j)}\ra
  \\
  =-\eps^{m-1}\big(\la u^{(i)},g^{(j)}\ra+\la g^{(i)},u^{(j)}\ra\big)+\eps^{2(m-1)}\la g^{(i)},g^{(j)}\ra=O(\eps^{m-1}),
  \quad\text{as }\eps\to 0.
\end{multline*}

Lemma~\ref{PropQuasimodes} and  estimates \eqref{EVminusV}, \eqref{VminusV*} will now be applied  to construct a family of quasimodes by setting   $d=2J C\eps^{ \beta(m,\eta)}$, $\theta=c\eps^{m-1}$ and $r=C\eps^{ \beta(m,\eta)}$.
The condition $rd^{-1}+\theta<J^{-1}$ is met because the following inequality
\begin{equation*}
 (2J)^{-1}+c\eps^{m-1}<J^{-1}
\end{equation*}
holds for sufficiently small values of $\eps$.

Summarizing, we have

\begin{theorem}\label{ThM12bis}
Assume $m\in(1,2)$ and $\kappa\in C^1(\gamma)$ such that \eqref{gg} holds.
Let $\lambda$ be an eigenvalue of  $\cD$ with multiplicity $J$, i.e., $\lambda=\lambda_j=\lambda_{j+1}=\cdots=\lambda_{j+J-1}$ and $\lambda_{j-1}<\lambda<\lambda_{j+J}$.
Then the total multiplicity of eigenvalues of $\cA_\eps$ that lie in the interval
\begin{equation*}
   \Delta_\eps=\left[\lambda-2J C\eps^{ \beta(m,\eta)},\lambda+2J C\eps^{ \beta(m,\eta)}\right]
\end{equation*}
is equal to $J$. Here $ \beta(m,\eta)$ is given by \eqref{BetaNM}, where $\eta$ is any positive number such that
$m-1<\eta<1.$

In addition, if $\lambda$ is a simple eigenvalue of $\cD$ with an eigenfunction $u$, $\|u\|=1$, and the interval $\Delta_\eps=\left[\lambda-2J C\eps^{ \beta(m,\eta)+ \tau},\lambda+2J C\eps^{ \beta(m,\eta)+\tau}\right]$ for a certain $\tau>0$ contains only an eigenvalue  of  $\cA_\eps$,  then there exists a sequence of eigenfunction $u_\eps$ of $\cA_\eps$ such that $u_\eps\to u$ in $\cH$ weakly.
\end{theorem}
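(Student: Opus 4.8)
The plan is to carry the whole analysis over to the compact, positive, self-adjoint operator $A_\eps$ on $\cH$ from Section~\ref{SecM1}. By the Remark after Theorem~\ref{TheoremMequals1}, $\cA_\eps$ and $A_\eps$ share their eigenfunctions and $\mu\mapsto\mu^{-1}$ is a bijection between their positive spectra; since $t\mapsto t^{-1}$ is smooth near $\lambda^{-1}$ with derivative of size $\asymp\lambda^{-2}$, counting eigenvalues of $\cA_\eps$ in a shrinking window about $\lambda$ amounts, after absorbing the factor $\lambda^{2}$ into the constants, to counting eigenvalues of $A_\eps$ in the corresponding window about $\lambda^{-1}$. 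Fix $\eta\in(m-1,1)$, so that $\beta(m,\eta)>0$ by \eqref{BetaNM}. The discussion preceding the statement has already produced the family of quasimodes $\{(\lambda^{-1},w_\eps^{(i)})\}_{i=1}^{J}$ of $A_\eps$ with error $r=C\eps^{\beta(m,\eta)}$ and deviation from orthogonality $\theta=c\eps^{m-1}$, built from a basis $\{u^{(i)}\}_{i=1}^{J}$ of $\ker(\cD-\lambda)$ normalised by $\lambda\int_\Omega\rho u^{(i)}\overline{u^{(j)}}\,dS=\delta_{ij}$ and the correctors $g^{(i)}$ of \eqref{gg}, via $w_\eps^{(i)}=u^{(i)}-\eps^{m-1}g^{(i)}$.

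For the first assertion I would take $d=2JC\eps^{\beta(m,\eta)}$, so that $rd^{-1}+\theta=(2J)^{-1}+c\eps^{m-1}<J^{-1}$ for all small $\eps$, and apply Lemma~\ref{PropQuasimodes} with $L=A_\eps$: the interval $[\lambda^{-1}-d,\lambda^{-1}+d]$ then carries eigenvalues of $A_\eps$ of total multiplicity $J$, and transporting this through $t\mapsto t^{-1}$ (renaming constants) yields $J$ eigenvalues of $\cA_\eps$, with multiplicity, in $\Delta_\eps$. To secure the matching upper bound $\le J$ rather than read it from Lemma~\ref{PropQuasimodes}, the natural device is to compress $A_\eps$ to $\cH_0=\{\phi\in\cH:\phi=0\text{ on }\gamma\}$: since elements of $\cH_0$ vanish on $\gamma$, the bound of Proposition~\ref{PropositionL2small} sharpens to $\|\phi\|_{L_2(\omega^\eps)}^{2}\le c\eps^{2}\|\phi\|^{2}$, whence $|\eps^{-m}\int_{\omega^\eps}q^\eps\phi\overline\psi\,dS|\le c\eps^{2-m}\|\phi\|\,\|\psi\|$ for $\phi,\psi\in\cH_0$, so that the compression $P_0A_\eps|_{\cH_0}$ converges in operator norm, at rate $O(\eps^{2-m})$, to the operator on $\cH_0$ associated with the form $\int_\Omega\rho\phi\overline\psi\,dS$, whose nonzero eigenvalues are the reciprocals of those of $\cD$; as $\lambda$ is isolated in $\sigma(\cD)$ with $\lambda_{j-1}<\lambda<\lambda_{j+J}$, this compression has exactly $J$ eigenvalues in a fixed neighbourhood of $\lambda^{-1}$ for small $\eps$. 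The step I expect to be the real obstacle is then to pass from $P_0A_\eps|_{\cH_0}$ back to $A_\eps$ on all of $\cH$: any eigenfunction $u^\eps$ of $\cA_\eps$ with eigenvalue in $\Delta_\eps$ satisfies $\|u^\eps|_\gamma\|_{L_2(\gamma)}^{2}=O(\eps^{\min\{m-1,1/2\}})$ by \eqref{norm}, but upgrading this smallness of the trace to smallness of the $\cH_0^{\perp}$-component of $u^\eps$ in $\cH$, uniformly in $\eps$ --- equivalently, ruling out that the boundary-layer part of $\sigma(\cA_\eps)$ generated near $\gamma$ enters the tiny window $\Delta_\eps$ --- requires exploiting the equation \eqref{PertPrblEq} inside the thin strips together with the $\eps$-independent elliptic regularity away from $\gamma$, and is the delicate point.

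For the second assertion, $J=1$ and we are given a single eigenvalue $\mu_\eps$ of $\cA_\eps$ in $\Delta_\eps$ with $|\mu_\eps-\lambda|=O(\eps^{\beta(m,\eta)+\tau})$; by the first assertion $\mu_\eps$ is the only eigenvalue of $\cA_\eps$ within distance $\asymp\eps^{\beta(m,\eta)}$ of $\lambda$, so in the $A_\eps$ picture $\mu_\eps^{-1}$ is the unique eigenvalue of $A_\eps$ in $[\lambda^{-1}-d,\lambda^{-1}+d]$ for any $d$ lying between $\sqrt2\,r$ and the distance from $\mu_\eps^{-1}$ to the rest of $\sigma(A_\eps)$, a range that is nonempty because that distance is at least $2r\bigl(1-o(1)\bigr)$ while $r=C\eps^{\beta(m,\eta)}$ and $\sqrt2<2$. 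Lemma~\ref{Lazutkinlema}, applied with $L=A_\eps$, $v=w_\eps=u-\eps^{m-1}g$, $\mu=\lambda^{-1}$ and this $d$, then produces a unit eigenfunction $u_\eps$ of $A_\eps$ (hence of $\cA_\eps$, for $\mu_\eps$) with $\|w_\eps-u_\eps\|\le2rd^{-1}<\sqrt2$; since $w_\eps\to u$ strongly in $\cH$, this gives $\limsup_\eps\|u-u_\eps\|<\sqrt2$. Now take $u_\eps\rightharpoonup\tilde u$ along a subsequence: by Theorem~\ref{ThM12} either $\tilde u=0$ or $\tilde u$ is a $\lambda$-eigenfunction of $\cD$, hence $\tilde u=\gamma u$, and letting $\eps\to0$ in $\|u-u_\eps\|^{2}=2-2\,\mathrm{Re}\langle u,u_\eps\rangle$ forces $\mathrm{Re}\langle u,\tilde u\rangle>0$, so $\gamma\neq0$. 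Fixing the phase of each $u_\eps$ so that $\langle u,u_\eps\rangle\ge0$ makes the (positive) limit $\gamma$ real, and the previous bound keeps $\langle u,u_\eps\rangle$ away from $0$; replacing $u_\eps$ by $\langle u,u_\eps\rangle^{-1}u_\eps$ --- still an eigenfunction, still bounded in $\cH$ --- arranges $\langle u,u_\eps\rangle=1$ for all $\eps$, so every weak limit point equals $u$, i.e.\ $u_\eps\rightharpoonup u$ in $\cH$.
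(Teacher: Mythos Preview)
Your argument for the first assertion is exactly the paper's: set $d=2JC\eps^{\beta(m,\eta)}$, $r=C\eps^{\beta(m,\eta)}$, $\theta=c\eps^{m-1}$, check $rd^{-1}+\theta=(2J)^{-1}+c\eps^{m-1}<J^{-1}$ for small $\eps$, and invoke Lemma~\ref{PropQuasimodes}. The paper reads ``total multiplicity $J$'' directly from that lemma as stated and stops there. Your proposed alternative route to the upper bound $\le J$ --- compressing $A_\eps$ to $\cH_0$ and comparing with the form $\int_\Omega\rho\phi\overline\psi\,dS$ on $\cH_0$ --- is not in the paper, and, as you yourself flag, lifting the eigenvalue count from the compression $P_0A_\eps|_{\cH_0}$ back to $A_\eps$ on all of $\cH$ is the real obstruction; the paper sidesteps this entirely by taking Lemma~\ref{PropQuasimodes} at face value.

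For the second assertion the paper is terse: it only records that Lemma~\ref{PropQuasimodes} and estimates \eqref{EVminusV}--\eqref{VminusV*} ``will now be applied'' with the same parameters. Your argument is a more explicit execution of that idea. You use the simplicity hypothesis together with the first part to isolate $\mu_\eps^{-1}$ as the sole eigenvalue of $A_\eps$ in a window of radius comparable to $r$, apply \eqref{VminusV*} with a $d$ strictly between $\sqrt2\,r$ and that radius to get $\limsup_\eps\|u-u_\eps\|<\sqrt2$, and then combine this with Theorem~\ref{ThM12} and a phase normalisation to rule out the zero weak limit and conclude $u_\eps\rightharpoonup u$. The paper does not spell out the appeal to Theorem~\ref{ThM12} or the phase-fixing step; your version supplies them.
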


\noindent
{\bf Acknowledgements} \quad
This work has been partially supported by Grant PID2022-137694NB-I00 funded by MICIU/AEI/10.13039/501100011033 and by ERDF/EU.

\vspace{0.2cm}

\noindent
{\bf Conflict of interest} The authors have not disclosed any conflict of interest.

\end{document}